\newtheorem{theorem}{Theorem}
\newtheorem{proposition}[theorem]{Proposition}
\newtheorem{lemma}[theorem]{Lemma}
\newtheorem{definition}[theorem]{Definition}
\newtheorem{corollary}[theorem]{Corollary}
\newtheorem{remark}[theorem]{Remark}
\newtheorem{example}{Example}
\numberwithin{equation}{section}
\numberwithin{theorem}{section}
\newcommand{\Z}{\ensuremath{\mathbb{Z}}}
\newcommand{\Q}{\ensuremath{\mathbb{Q}}}
\newcommand{\R}{\ensuremath{\mathbb{R}}}
\newcommand{\C}{\ensuremath{\mathbb{C}}}
\newcommand{\vol}{\ensuremath{\mathrm{vol}}}
\newcommand{\bracket}[1]{\ensuremath{\langle #1 \rangle}}
\newcommand{\restrict}[2]{\ensuremath{\left. #1 \right|_{#2}}}
\DeclareMathOperator{\Hom}{Hom}
\def\bs{\boldsymbol}
\def\D{\Delta}
\def\p{\partial}
\def\P{\mathbb{P}}
\begin{document}
\title[GIT-stability of toric Fano varieties]
{Asymptotic Chow semistability implies Ding polystability for Gorenstein toric Fano varieties}

\author{Naoto Yotsutani}

\address{Kagawa University, Faculty of education, Mathematics, Saiwaicho $1$-$1$, Takamatsu, Kagawa,
$760$-$8522$, Japan}
\email{yotsutani.naoto@kagawa-u.ac.jp}

\makeatletter
\@namedef{subjclassname@2020}{%
  \textup{2020} Mathematics Subject Classification}
\makeatother

\subjclass[2020]{Primary: 14L24, Secondary: 14M25, 53C55}
\keywords{Chow stability, Relative stability, 
Fano varieties} 
\dedicatory{}
\date{September 22, 2023}
\maketitle

\noindent{\bfseries Abstract.}
In this paper, we prove that a {\emph{Gorenstein}} toric Fano variety $(X, -K_X)$ is asymptotically Chow semistable, then it is Ding polystable with respect to toric test configurations (Theorem \ref{thm:Main}).
This extends the known result obtained by others (Theorem \ref{thm:BOY}) to the case where $X$ admits {\emph{Gorenstein singularity}}.
We also show the additivity of the Mabuchi constant for the product toric Fano varieties in Proposition \ref{prop:ProdConst} based on the author's recent work (Ono, Sano and Yotsutani in arXiv:$2305.05924$).
Applying this formula to certain toric Fano varieties, we construct {\emph{infinitely many}} examples that clarify the difference between relative K-stability and relative Ding stability in a systematic way (Proposition \ref{prop:ProdExt}).
Finally, we verify relative Chow stability for Gorenstein toric del Pezzo surfaces using the combinatorial criterion developed in (Yotsutani and Zhou in {\emph{Tohoku Math. J.}} {\bf{71}} ($2019$), $495$-$524$.) and specifying the symmetry of the associated polytopes as well.


\section{Introduction}
Let $(X,L)$ be a polarized projective variety of complex dimension $n$.
One of the outstanding problem in K\"ahler geometry is to distinguish whether the first Chern class
$c_1(L)$ contains a K\"ahler metric $\omega$ with constant scalar curvature (cscK metric).
A parallel reasoning question in 
algebraic geometry is to study an appropriate notion of stability
of $(X,L)$ in the sense of Geometric Invariant Theory (GIT). This leads us to investigate various notions of
GIT-stability and study the relation among them. For example, Ross-Thomas clarified the following implications
among GIT-stability in their paper \cite{RT07}: 
\vskip 5pt
$\begin{matrix}
& \text{Asymptotic Chow stability} & \Rightarrow  & \text{Asymptotic Hilbert stability} \\ 
\Rightarrow  & \text{Asymptotic Hilbert semistability} & \Rightarrow  & 
\text{Asymptotic Chow semistability}  \\ 
  &  & \Rightarrow  & 
\text{K-semistability}.  \\
\end{matrix}$
\vskip 5pt

In \cite{Mab08}, Mabuchi proved that Chow stability and Hilbert stability asymptotically coincide.
Remark that for a fixed positive integer $i\in \Z_{+}$, Chow stability for $(X,L^{\otimes i})$ implies Hilbert stability for
$(X,L^{\otimes i})$ (i.e. not necessarily asymptotic stability case) by the classical result due to Forgaty \cite{Fo69}. See also \cite[Corollary $3.4$]{KSZ92} for more combinatorial description of this result 
in terms of GIT weight polytopes.

In order to describe our issue more precisely, we first recall that a complex normal variety $X$ is 
said to be {\em Fano} if its anticanonical divisor $-K_X$ is ample. It is called {\em Gorenstein} if $-K_X$ is Cartier.
Suppose that $X$ is a {\emph{smooth}} Fano variety (i.e. a Fano manifold) with a K\"ahler metric
\[
\omega = \sqrt{-1}g_{i\bar j}dz_i \wedge d\bar z_j \in 2\pi c_1(X).
\]
We recall that $\omega_{\varphi}=\omega + \sqrt{-1}\p \bar \p \varphi$ is K\"ahler-Einstein if and only if $\varphi$ is
a critical point of either the K-energy $\nu_{\omega}$ or the Ding 
functional $\mathcal D_{\omega}$ where both functionals defined on the space of K\"ahler potentials $\mathcal H_{\omega}=\set{\varphi \in C^{\infty}(X) | \omega_{\varphi} > 0}$. It is known that these functionals satisfy the inequality $\mathcal D_{\omega}\leqslant \nu_{\omega}$ which turns out to be the Ding invariant is less than or equal to the Donaldson-Futaki (DF) invariant
\cite{Ber16}. In the case where $X$ is toric, Yao gave explicit description of the inequality between the Ding
invariant and the DF invariant in terms of the associated polytope \cite[Proposition $4.6$]{YiYao17}. 
In particular, Ding polystability implies K-polystability for a toric Fano manifold. 
Moreover, the converse direction has been also proved in \cite{Fuj16} for (not necessarily toric) Fano manifolds.
\begin{theorem}[Fujita]\label{thm:BBJ-Fuj}
Let $X$ be a Fano manifold. Then Ding semistability is equivalent to K-semistability.
Furthermore, Ding polystability (resp. Ding stability) is also equivalent to K-polystability 
(resp. K-stability). 
\end{theorem}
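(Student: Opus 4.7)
The plan is to handle the two directions of the equivalence separately. The easy direction, that Ding semistability (resp.\ polystability, stability) implies $K$-semistability (resp.\ polystability, stability), follows from the pointwise inequality $\mathrm{Ding}(\mathcal{X},\mathcal{L})\leq\mathrm{DF}(\mathcal{X},\mathcal{L})$ on every normal ample test configuration, which is the test-configuration incarnation of the functional inequality $\mathcal D_{\omega}\leqslant\nu_{\omega}$ recalled above (cf.\ \cite{Ber16}). Once Ding is nonnegative (resp.\ strictly positive except on product/trivial configurations), DF inherits the same sign condition, so no further work is needed.

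For the substantive reverse direction I would follow Fujita's MMP reduction. Given an arbitrary normal ample test configuration $(\mathcal{X},\mathcal{L})$ of the Fano manifold $X$, one runs a relative log minimal model program with scaling over $\mathbb{A}^1$ to produce a \emph{special} test configuration $(\mathcal{X}^{\mathrm{s}},\mathcal{L}^{\mathrm{s}})$, meaning $\mathcal{X}^{\mathrm{s}}$ is normal with irreducible, reduced, klt central fibre and $\mathcal{L}^{\mathrm{s}}=-K_{\mathcal{X}^{\mathrm{s}}/\mathbb{A}^1}$. Two properties drive the argument:
\begin{enumerate}
\item[(i)] the Ding invariant is \emph{non-increasing} along every step of this MMP, so that $\mathrm{Ding}(\mathcal{X}^{\mathrm{s}},\mathcal{L}^{\mathrm{s}})\leq\mathrm{Ding}(\mathcal{X},\mathcal{L})$;
\item[(ii)] on a special test configuration the Berman correction term $\mathrm{lct}-1$ vanishes, giving $\mathrm{Ding}(\mathcal{X}^{\mathrm{s}},\mathcal{L}^{\mathrm{s}})=\mathrm{DF}(\mathcal{X}^{\mathrm{s}},\mathcal{L}^{\mathrm{s}})$.
\end{enumerate}
Chaining (i) and (ii), $K$-semistability of $X$ forces $\mathrm{DF}(\mathcal{X}^{\mathrm{s}},\mathcal{L}^{\mathrm{s}})\geq 0$, and hence $\mathrm{Ding}(\mathcal{X},\mathcal{L})\geq 0$ as required.

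For polystability and stability one refines (i) by checking that each nontrivial MMP step \emph{strictly} decreases the Ding invariant unless that step is a specialisation to a product. Under the $K$-polystability hypothesis, equality $\mathrm{Ding}(\mathcal{X},\mathcal{L})=0$ therefore already forces $(\mathcal{X},\mathcal{L})$ to be special, and then (ii) together with $K$-polystability identifies it with a product test configuration; the $K$-stable case is identical with ``product'' replaced by ``trivial''. A conceptually different route, due to Berman--Boucksom--Jonsson \cite{BBJ15}, recasts both invariants as non-archimedean Monge--Amp\`ere energies on the Berkovich analytification $X^{\mathrm{NA}}$ and deduces the equivalence from non-archimedean pluripotential theory, bypassing the MMP entirely.

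The principal technical obstacle in the MMP approach is establishing the monotonicity (i): one has to express the Ding invariant intersection-theoretically and then verify that divisorial contractions, flips, and passage to the relative ample model each preserve or strictly decrease the functional, for which the klt hypothesis on the outputs of the MMP is essential. In the BBJ strategy the analogous difficulty migrates to extending the Ding functional to arbitrary non-archimedean plurisubharmonic metrics on $-K_X$ and proving a maximum principle for the corresponding non-archimedean energy; either way, this is the heart of the theorem.
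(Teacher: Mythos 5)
The paper does not actually prove Theorem \ref{thm:BBJ-Fuj}: it is imported as a black-box result from \cite{BBJ15, Fuj16}, so there is no in-paper argument to compare yours against. Your outline is, as far as it goes, an accurate summary of how the theorem is established in those references: the direction from Ding stability to $K$-stability follows from Berman's inequality $\mathrm{Ding}(\mathcal X,\mathcal L)\leq \mathrm{DF}(\mathcal X,\mathcal L)$ on normal ample test configurations, and the converse follows either from the Li--Xu/Fujita reduction to special test configurations via the relative MMP with scaling, or from the non-archimedean approach of Berman--Boucksom--Jonsson. Two caveats, though. First, what you have written is a roadmap rather than a proof: the two load-bearing claims --- monotonicity of the Ding invariant under each MMP step, and the identity $\mathrm{Ding}=\mathrm{DF}$ on special test configurations --- are precisely the technical content of the cited papers and are asserted, not verified. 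Second, your treatment of polystability is the weakest point: the assertion that ``each nontrivial MMP step strictly decreases the Ding invariant unless that step is a specialisation to a product'' is not how the equality case is actually analysed (one must track when every inequality in the chain is an equality and show the original test configuration differs from a product only by base change and twisting), and the polystable equivalence is genuinely harder than the semistable and uniform cases --- it is not contained in \cite{Fuj16} and required the later analysis of \cite{BBJv2}. In short: correct attribution and correct architecture, but the heart of the theorem is cited rather than proved --- which is also all the paper itself does.
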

On the one hand, in differential-geometrical point of view,
Theorem \ref{thm:BBJ-Fuj} corresponds to the fact that cscK metrics in the anticanonical classes of Fano manifolds are K\"ahler-Einstein metrics.
Recall that for a compact K\"ahler manifold $X$ with a fixed K\"ahler class $[\omega]$, 
$\varphi$ is a critical point of $\nu_{\omega}$ if and only if $\omega_{\varphi}$ is a cscK metric.
On the other hand, 
we conclude that if a Fano manifold $X$ is asymptotically Chow semistable then it is Ding {\em semistable} according to the previous argument.
In the case where $X$ is a toric Fano manifold, it is known that $X$ is K-semistable if and only if it is K-polystable \cite{Ber16, BJ20, WZ04}.
Summing up these arguments, we have the following.
\begin{theorem}[Berman, Ono, Yao]\label{thm:BOY}
Let $(X, -K_X)$ be a smooth toric Fano variety.
If $(X, -K_X)$ is asymptotically Chow semistable with respect to toric test configurations, then it is Ding polystable with respect to toric test configurations.
\end{theorem}
In this article, we show that a more general result by a combinatorial proof.
\begin{theorem}\label{thm:Main}
Let $(X, -K_X)$ be a {\emph{Gorenstein}} toric Fano variety.
If $(X, -K_X)$ is asymptotically Chow semistable with respect to toric test configurations, then it is Ding polystable with respect to toric test configurations.
\end{theorem}
Essentially, the proof of Theorem \ref{thm:Main} is based on the {\em Ehrhart reciprocity law} and the fact that any toric Fano variety is K-polystable if and only if the barycenter of the associated reflexive polytope $\D\subseteq M_\R$ is the origin.
As we mentioned above, another advantage of our combinatorial approach is that $X$ may admit Gorenstein singularity (i.e. not necessarily  smooth) in our main theorem.
However, it does not work for a $\Q$-Gorenstein toric variety since the corresponding polytope $\D$ contains not only the origin, but also other lattice points.
It also should be noted that we only assume $(X, -K_X)$ to be asymptotically Chow {\emph{semistable}} and {\em do not assume} $(X,-K_X)$ to be asymptotically Chow polystable in Theorem $\ref{thm:Main}$. 

\vskip 5pt

In the following Section $\ref{sec:ARCS}$, we discuss relative stability of toric Fano variety.
Recently, we found that there are at least four examples of smooth toric Fano variety which clarify the difference between relative K-stability and relative Ding stability in \cite{NSY23}.
In order to discover these four examples of a relatively K-polystable toric Fano variety, but which is relatively Ding unstable, 
the author focused on the geometrical description such that they are all $\P^1$-bundles over $\P^m$.
In particular, we consider the case of Picard number one projective toric varieties.  
Based on a recent argument discussed in \cite{OSY23}, we systematically construct such examples in arbitrary dimension.
\begin{proposition}[See, Corollary \ref{cor:ExtProd}]\label{prop:ProdExt}
Fixing a positive integer $r$, we consider an extremal smooth toric Fano variety $X_k$ with the associated polytope $\D_k$, for $1\leqslant k\leqslant r$.
Suppose $\theta_{\D_k}(\bs x_k)$ be the potential function of $\D_k$ defined in $\eqref{def:potential}$ with $\frac{1}{r}\leqslant \theta_{\D_k} < 1$.
For the product polytope $\D=\prod_{k=1}^r\D_k$, the associated smooth toric Fano variety $(X_\D, -K_{X_\D})$ is relatively K-polystable, but it is relatively Ding unstable.
\end{proposition}
In order to prove Proposition $\ref{prop:ProdExt}$, we shall use the following additive property of the Mabuchi constant $M_{X_\D}$ for the products of toric Fano varieties.
\begin{proposition}[See, Corollary \ref{cor:additive}]\label{prop:ProdConst}
For the product polytope $\D$ of reflexive polytopes $\D_k$ for $k=1, \dots , r$, let $M_{X_\D}$ and $M_{X_{\D_{k}}}$ be the Mabuchi constant defined in $\eqref{eq:const}$.
Then we have the equality
\[
M_{X_{\D}}=M_{X_{\D_{1}}}+\dots + M_{X_{\D_{r}}}.
\]
\end{proposition}
We give a purely combinatorial proof of Proposition \ref{prop:ProdConst} in Section $\ref{sec:Product}$.
In the following Section $\ref{sec:ChowToricdP}$, we classify Gorenstein toric del Pezzo surfaces in terms of (asymptotic) relative Chow polystability. We use the criteria $\eqref{eq:relChowWeight}$ to verify asymptotic relative
Chow stability of polarized toric variety. However, it is very difficult to verify asymptotic relative Chow stability of a given polarized toric variety because we have to prove that there exists $t_i\in \R$ satisfying the equality 
in $\eqref{eq:relChowWeight}$ for {\emph{any}} positive integer $i$. In order to solve this difficulty, we consider the {\emph{symmetry of the associated polytopes}} $\D\subset M_\R$ which works very well for two 
dimensional reflexive polygons ($16$ types). Adapting the symmetry of reflexive polygons and a combinatorial criteria $\eqref{eq:relChowWeight}$ investigated by Zhou and the author in \cite{YZ19}, we verify relative 
Chow stability of each Gorenstein toric del Pezzo surfaces.
\begin{proposition}[See, Proposition $\ref{prop:ARCS}$]
Among all $16$ isomorphism classes of Gorenstein toric del Pezzo surfaces, there are $5$ isomorphism classes of asymptotically relatively Chow polystable surfaces and $4$ isomorphism classes of asymptotically relatively Chow unstable surfaces. The remaining $7$ classes are relatively Chow polystable with respect to the anticanonical polarization.
\end{proposition}
All the results are listed in Table $\ref{table:RelChowSta}$. We also refer the reader to Table $\ref{table:Combinatorics}$ for specifying symmetry of each reflexive polygon $\D\subset M_\R$.


This paper is organized as follows. Section $\ref{sec:Preliminary}$ is a brief review of Gorenstein toric Fano
varieties, Ding stability and asymptotic Chow stability. The proof of Theorem $\ref{thm:BOY}$ is given in Section $\ref{sec:proof}$. 
Section $\ref{sec:ARCS}$ collects the results of relative algebro-geometric stability. 
In Sections $\ref{sec:Fundamental}$ and $\ref{sec:ToricRelChow}$, we recall the criteria of relative Chow stability of polarized toric variety investigated by the author and B. Zhou in \cite{YZ19}.
We prove Proposition $\ref{prop:ProdConst}$ in Section $\ref{sec:Product}$ by applying the product formulas regarding convex polytopes which was also used in \cite{OSY23}.
See Lemma $\ref{lem:Prod}$ and the proof of Proposition $\ref{prop:Prod}$ for further details.
Section $\ref{sec:ChowToricdP}$ is devoted to verify asymptotic relative Chow stability of Gorenstein toric del Pezzo surfaces.
All the results and practical values of invariants are summarized in Proposition $\ref{prop:ARCS}$ and Table $\ref{table:RelChowSta}$.


\section{Preliminary}\label{sec:Preliminary}
\subsection{Gorenstein toric Fano varieties.}\label{sec:GorToricFano}
We first recall the standard notation and basic definitions of Gorenstein toric Fano varieties,
as it can be found in \cite{CLS11}.

Let $N\cong \Z^n$ be a lattice of rank $n$, while $M=\Hom (N,\Z)$ is the $\Z$-dual of $M$. 
Let $P \subseteq N_{\R}\cong \R^n$ be a lattice polytope with ${\bf{0}}\in {\mathrm{Int}}(P)$. 
We assume that all vertices of $P$ are primitive elements in $N$. For a subset $S$ of $N_{\R}$,
we denote the positive hull of $S$ by $\mathrm{pos}(S)$ i.e. $\mathrm{pos}(S)=\sum_{v\in S}\R_{\geqslant 0}v$.
Then
\[
\Sigma_P:=\set{\mathrm{pos}(F) | F \text{ is a face of }P}
\]
forms the fan which is often called the {\emph{normal fan}} of $P$. It is well-known that the fan
$\Sigma=\Sigma_P$ associates a toric variety $X_{\Sigma}$ with the complex torus $T_N:=\mathrm{Spec}\: \C[M]$ action. Here and hereafter we denote the associated toric variety by $X$ for simplicity.
Recall that the anticanonical divisor of $X$ is given by $-K_X=\sum_{\rho}D_\rho$ where $D_\rho$ is the torus invariant Weil divisor corresponding to a ray $\rho\in \Sigma(1)$. Then the dual polytope of $P$ (w.r.t. $-K_X$)
is defined by
\[
\D=\set{y \in M_\R | \braket{x,y} \geqslant -1 \text{ for all } x\in P}
\]
which is also an $n$-dimensional (rational) polytope in $M_\R$ with ${\bf{0}}\in \mathrm{Int} (\D)$.
Then $\D$ is called {\emph{reflexive}} if it is a lattice polytope. There is a bijective correspondence
beween isomorphism classes of reflexive polytopes and isomorphism classes of
Gorenstein toric Fano varieties. 
For a fixed dimension $n$, there are only finitely many isomorphism classes of $n$-dimensional reflexive
polytopes \cite{KS98, KS00}. 
They found $1,16,4319$ and $473800776$ isomorphism classes for $n=1,2,3$ and $4$.
Throughout the paper, we assume that a (toric) Fano variety $X$ admits at worst Gorenstein singularities.

\subsection{Ding stability for Fano varieties.}
In this section, we briefly review a notion of Ding stability, see \cite{Ber16, Fuj16, YiYao17} for more details. 

\vskip 5pt

Let $(X,\omega)$ be an $n$-dimensional Fano manifold with a K\"ahler metric $\omega \in 2 \pi c_1(X)$.
We set $V$ to be the volume $\displaystyle V:=\int_X \omega^n$ of the given Fano manifold $X$. Recall that the {\em Ding functional} $\mathcal D_{\omega}: \mathcal H_{\omega} \rightarrow \R$ is given by
\[
\mathcal D_{\omega}:= -\frac{1}{V}\int_0^1\int_X \dot{\varphi}_t(1-e^{\rho_{\omega_{t}}})\omega_t^n\; dt,
\]
where $\varphi_t$ is a smooth path in $\mathcal H_{\omega}$ joining $0$ with $\varphi$ and $\rho_{\omega}$ is the
function which satisfies
\begin{equation}\label{eq:normalization}
\mathrm{Ric}(\omega)-\omega = \sqrt{-1}\p\bar \p \rho_{\omega} \quad \text{and}  \quad \int_X (e^{\rho_{\omega}}-1)\omega^n =0.
\end{equation}
Then we readily see that $\varphi$ is a critical point of $\mathcal D_{\omega}$ if and only if 
$\omega_{\varphi}$ is a K\"ahler-Einstein metric.

Next we recall a notion of a test configuration. A {\em test configuration} for a Fano variety $(X, -K_X)$ is a polarized scheme $(\mathcal X, \mathcal L)$ with:
\begin{itemize}
\item a $\C^{\times}$-action and a $\C^{\times}$-equivariant proper flat morphism $\pi:\mathcal X \rightarrow \C$, where $\C^{\times}$ acts on the base by multiplication. 
\item a $\C^{\times}$-equivariant line bundle $\mathcal L \rightarrow \mathcal X$ which is ample over all fiber
$\mathcal X_z:=\pi^{-1}(z)$ for $z\neq 0$, and $(X, -K_X)$ is isomorphic to $(\mathcal X_z, \mathcal L_z)$ with $\mathcal L_z=\mathcal L |_{\mathcal X_z}$.
\end{itemize}
Taking a Hermitian metric $h_0$ on $\mathcal O_X(-K_X)$ with positive curvature, we can construct the Phong-Sturm geodesic ray $h_t$ which emanates from $h_0$ in $\mathcal H_\omega$
\cite{PS07}. 
In \cite{Ber16}, Berman defined the Ding invariant as the asymptotic slope of the Ding functional
along the geodesic rays. Moreover he showed that
\[
DF(\mathcal X, \mathcal L)=\lim_{t \rightarrow \infty}\frac{1}{V}\frac{d \mathcal D_{\omega}(h_t)}{d t}+q
\]
where the error term $q$ is non-negative and $DF(\mathcal X, \mathcal L)$ is the {\em Donaldson-Futaki invariant}.
Then the {\em Ding invariant} $\mathrm{Ding}(\mathcal X, \mathcal L)$ is given by
\[
\mathrm{Ding}(\mathcal X, \mathcal L)=\lim_{t \rightarrow \infty}\frac{1}{V}\frac{d \mathcal D_{\omega}(h_t)}{d t}.
\]
A Gorenstein Fano variety $X$ is said to be {\em Ding-semistable} if for any test configuration $(\mathcal X, \mathcal L)$ for $(X, -K_X)$, we have $\mathrm{Ding}(\mathcal X, \mathcal L)\geqslant 0$.
Moreover $X$ is said to be {\em Ding polystable} if $X$ is Ding semistable and $\mathrm{Ding}(\mathcal X, \mathcal L)=0$ if and only if $(\mathcal X, \mathcal L)$ is equivariantly isomorphic to $(X\times \C, p_1^*(-K_X))$
where $p_1:X\times \C \rightarrow X$ is the projection.

\vskip 3pt

Now we consider the toric case. Let $X$ be an $n$-dimensional toric Fano variety and $\D \subseteq M_\R$ the corresponding reflexive polytope with the coordinates ${\boldsymbol{x}}=(x_1,\dots ,x_n)$.
Recall that a piecewise linear convex function $u=\max\set{f_1, \ldots, f_{\ell}}$ on $\D$ is called {\em rational}
if $f_k=\sum a_{k,i}x_i+c_k$ with $(a_{k,1}, \ldots, a_{k,n})\in \Q^n$ and $c_k\in \Q$ for $k=1,\ldots, \ell$.
A {\em toric test configuration} for $(X, -iK_X)$, introduced by Donaldson \cite{Do02}, is a test configuration associated with a rational piecewise linear convex function $u$ on $\D$, so that $iQ$ is a lattice
polytope in $M_\R\times \R\cong \R^{n+1}$. 
Here $Q$ is given by
\[
Q=\set{(\boldsymbol{x},t)|\boldsymbol{x}\in \D,\; 0\leqslant t \leqslant R-u(\boldsymbol{x})}
\]
and $R$ is an integer such that $u\leqslant R$.
Then $iQ$ defines the $n+1$-dimensional polarized toric variety $(\overline{\mathcal X}, \overline{\mathcal L})$
and a flat morphism $\overline{\mathcal X} \rightarrow \C P^1$.
Hence the family restricted to $\C$ gives a torus equivariant test configuration $(\mathcal X, \mathcal L)$ for
$(X, -iK_X)$. 

The toric geodesic ray $h_t$ associated to a toric test configuration was described by
Song-Zeldich \cite{SoZeld12}.
In \cite{YiYao17}, Yao detected an explicit description of the Ding invariants of toric Fano varieties.
\begin{theorem}[Yao]
Let $(X,-K_X)$ be a Gorenstein toric Fano variety with the associated reflexive polytope $\D$.
Let $u$ be a piecewise linear convex function.
The Ding invariant of the toric test configuration associated to $u$ is given by
\begin{align}
\begin{split}
\mathrm{Ding}(\mathcal X, \mathcal L)&=\lim_{t \rightarrow \infty}\frac{1}{\vol(\D)}\frac{d \mathcal D_{\omega}(h_t)}{d t} \\
& = -u(0)+\frac{1}{\vol(\D)}\int_\D u({\boldsymbol{x}})\:dv =:\mathcal I_\D (u).
\end{split}
\end{align}
\end{theorem}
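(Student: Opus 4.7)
The plan is to compute the asymptotic slope of $\mathcal{D}_\omega$ along the Phong--Sturm geodesic ray $\varphi_t$ associated to the toric test configuration cut out by the rational piecewise linear convex function $u$. I would start by splitting the Ding functional into its standard two pieces
\[
\mathcal{D}_\omega(\varphi) = L_\omega(\varphi) - E_\omega(\varphi),
\]
where $E_\omega$ is the Monge--Amp\`ere (Aubin--Yau) energy and $L_\omega(\varphi) = -\log\!\bigl(\tfrac{1}{V}\int_X e^{-\varphi+\rho_\omega}\omega^n\bigr)$, with the normalization \eqref{eq:normalization} built in. The derivative formula
\[
\tfrac{d}{dt}\mathcal{D}_\omega(\varphi_t) \;=\; \tfrac{\int_X \dot\varphi_t\, e^{-\varphi_t+\rho_\omega}\omega^n}{\int_X e^{-\varphi_t+\rho_\omega}\omega^n} \;-\; \tfrac{1}{V}\int_X \dot\varphi_t\,\omega_t^n
\]
is the object whose $t\to\infty$ limit I need to identify.

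Next I would pass to the toric picture. By torus invariance the geodesic $\varphi_t$ is the pull-back of a convex function on $N_\R \cong \R^n$, whose Legendre transform is a convex function on $\Delta$; Song--Zelditch's description of the Phong--Sturm ray associated with $u$ tells me that this Legendre dual is (asymptotically, modulo a bounded term) $t\cdot u$. Pushing $\omega_t^n$ forward by the moment map gives (a multiple of) Lebesgue measure on $\Delta$, and the standard toric identity $\int_X \dot\varphi_t\,\omega_t^n = n!\int_\Delta u(\mathbf{x})\,dx$ then evaluates the Monge--Amp\`ere term: its asymptotic slope is $\tfrac{1}{\vol(\Delta)}\int_\Delta u(\mathbf{x})\,dx$ after normalizing $V = n!\,\vol(\Delta)$. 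This accounts for the second summand of $\mathcal{I}_\Delta(u)$.

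The $L$ term is the subtler piece and where the Gorenstein/reflexive hypothesis enters. Here the key point is that for a Gorenstein toric Fano variety the Ricci potential normalization \eqref{eq:normalization} forces $e^{-\varphi+\rho_\omega}\omega^n$ to push forward to (a uniform constant multiple of) Lebesgue measure on $\Delta$ with the origin $\mathbf{0}\in\mathrm{Int}(\Delta)$ as its barycentric/moment-map reference. After expressing the integral in logarithmic coordinates on the open orbit and applying Laplace's method to $\int e^{-t\,u^*(\xi)}\,d\xi$, where $u^*$ is the Legendre dual of $u$, I would show that the slope of $-\tfrac{1}{t}\log \int_X e^{-\varphi_t+\rho_\omega}\omega^n$ equals $\min_{\mathbf{x}\in\Delta}u(\mathbf{x})$ when $u$ attains its minimum inside $\Delta$; the reflexivity hypothesis together with the convention that the test configuration is built from $u$ with $u(\mathbf{0})$ as the reference value forces this minimum to be realized at (or replaced by) the value $u(\mathbf{0})$, giving $+u(\mathbf{0})$. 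Combining with the sign $\mathcal{D}_\omega = L_\omega - E_\omega$ yields $-u(\mathbf{0}) + \tfrac{1}{\vol(\Delta)}\int_\Delta u\,dx$, as claimed.

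The main obstacle is the precise Laplace-method identification of the $L$-slope with $u(\mathbf{0})$ rather than with $\min_\Delta u$ or some other boundary value; this is exactly where the Gorenstein (reflexive) normalization of $-K_X$ and the centering coming from \eqref{eq:normalization} must be used carefully, since for general polarizations one would obtain a different center. Once this identification is in hand, the rest reduces to the routine toric dictionary between integrals on $X$ and on $\Delta$.
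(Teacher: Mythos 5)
The paper itself gives no proof of this theorem; it is imported verbatim from Yao's work \cite{YiYao17}. Your proposed architecture is nevertheless the standard one and matches Yao's in spirit: split $\mathcal D_\omega=L_\omega-E_\omega$, compute the slope of the Monge--Amp\`ere energy via the moment-map pushforward to get $\frac{1}{\vol(\D)}\int_\D u\,dx$, and compute the slope of the $L$-part via Legendre duality and Laplace-type asymptotics of $\int_{\R^n}e^{-\phi_t}\,d\xi$ on the open orbit, where $\phi_t$ is the Legendre transform of $g_0+tu$. (There are sign slips --- with the usual conventions $\int_X\dot\varphi_t\,\omega_t^n=-n!\int_\D u\,dx$, and it is $\omega^n$, not $e^{\rho_\omega-\varphi}\omega^n$, that pushes forward to Lebesgue measure on $\D$ --- but these are repairable bookkeeping issues.)

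The genuine gap is in the step you yourself flag as the main obstacle, and your proposed resolution of it is wrong. The asymptotic slope of $-\frac1t\log\int e^{-tu^*(\xi)}\,d\xi$ is $\inf_{\xi}u^*(\xi)=-u^{**}(\mathbf 0)=-u(\mathbf 0)$ by biduality of convex functions; it is \emph{not} $-\min_{\mathbf x\in\D}u(\mathbf x)=u^*(\mathbf 0)$, and nothing ``forces the minimum of $u$ to be realized at $\mathbf 0$'': $u$ is an arbitrary convex piecewise linear function whose minimizer can sit anywhere in $\D$. Concretely, take $X=\P^1$, $\D=[-1,1]$, $u(x)=|x-\tfrac12|=\max(x-\tfrac12,\,\tfrac12-x)$: then $\min_\D u=0$ is attained at the interior point $x=\tfrac12$, while $u(0)=\tfrac12$. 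Writing $u=\max_k(\langle a_k,\mathbf x\rangle+c_k)$ one gets $\phi_t(\xi)\leqslant\phi_0(\xi-ta_k)-tc_k$ for every $k$, hence $\int e^{-\phi_t}d\xi\geqslant e^{t\max_kc_k}\int e^{-\phi_0}d\xi$ with $\max_kc_k=u(\mathbf 0)=\tfrac12$, so the exponential rate already exceeds $\min_\D u=0$; the matching upper bound (the actual content of Yao's computation) shows the rate is exactly $u(\mathbf 0)$. The correct mechanism is that the growth analysis of the anticanonical measure $e^{-\phi_t}\,d\xi$ localizes at the origin of $M_\R$ --- the unique interior lattice point of the reflexive polytope, i.e.\ the distinguished anticanonical section --- and reads off the constants $c_k$, which are the values of the affine pieces of $u$ at $\mathbf 0$, irrespective of where $u$ attains its minimum. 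As written, your argument returns the wrong $L$-slope (hence the wrong Ding invariant) for every $u$ with $\min_\D u\neq u(\mathbf 0)$, so this key identification must be replaced before the proof goes through.
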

Then a reflexive polytope $\D\subseteq M_\R$ is said to be {\em Ding polystable} if $\mathcal I_\D (u)\geqslant 0$
for all convex piecewise linear functions $u$ and the equality holds if and only if $u$ is affine linear.
One can observe that $\mathcal I_\D (u)$ is invariant when we add affine linear functions to convex piecewise linear
functions. Hence it suffices to consider {\em normalized} convex piecewise linear functions $u$ on $\D$ for our purpose, that is, $u({\boldsymbol{x}})\geqslant u(0)=0$.  
The following observation was given by Yao \cite{YiYao17}, and we write down the detail for the reader's convenience.
\begin{proposition}[Yao]\label{prop:YaoYi}
If $\D$ is a reflexive polytope, then the associated Gorenstein toric Fano variety $(X, -K_X )$
is Ding polystable if and only if the barycenter of $\D$ is ${\bf{0}}$.
\end{proposition}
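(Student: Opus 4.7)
The plan is to prove both directions by reducing to the formula $\mathcal{I}_\D(u) = -u(\mathbf{0}) + \frac{1}{\vol(\D)}\int_\D u(\mathbf{x})\,dx$ given in the preceding theorem of Yao, and exploiting convexity together with the definition of the barycenter $\mathbf{b} = \frac{1}{\vol(\D)}\int_\D \mathbf{x}\,dx$.

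For the forward implication I would establish the contrapositive: assume $\mathbf{b} \neq \mathbf{0}$ and construct a destabilizing function. Since $\mathbf{b} \neq \mathbf{0}$, there exists a linear functional $L$ on $M_\R$ with $L(\mathbf{b}) < 0$ (take for instance $L(\mathbf{x}) = -\langle \mathbf{b}, \mathbf{x}\rangle$). Regarding $u = L$ as a (trivially convex) piecewise linear function on $\D$, we have $u(\mathbf{0}) = 0$ and $\mathcal{I}_\D(u) = \frac{1}{\vol(\D)}\int_\D L(\mathbf{x})\,dx = L(\mathbf{b}) < 0$, so $(X,-K_X)$ fails to be even Ding semistable.

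For the reverse implication, assume $\mathbf{b} = \mathbf{0}$ and take an arbitrary convex piecewise linear function $u$ on $\D$. Pick a subgradient of $u$ at the origin, that is, a linear functional $L$ such that $u(\mathbf{x}) \geq u(\mathbf{0}) + L(\mathbf{x})$ for every $\mathbf{x} \in \D$. Integrating this inequality over $\D$ and using $\int_\D L(\mathbf{x})\,dx = \vol(\D)\,L(\mathbf{b}) = 0$, I get $\frac{1}{\vol(\D)}\int_\D u\,dx \geq u(\mathbf{0})$, i.e.\ $\mathcal{I}_\D(u) \geq 0$.

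For the equality clause I would argue that if $\mathcal{I}_\D(u) = 0$ then $\int_\D \bigl(u(\mathbf{x}) - u(\mathbf{0}) - L(\mathbf{x})\bigr)\,dx = 0$. Since the integrand is non-negative and continuous on $\D$, it vanishes identically, so $u$ coincides with the affine linear function $u(\mathbf{0}) + L(\mathbf{x})$ on $\D$. Combined with the converse (an affine linear $u$ obviously gives $\mathcal{I}_\D(u) = L(\mathbf{b}) = 0$), this yields the desired characterization of the equality case. The only subtle point, and the one I would be careful about, is ensuring the existence of the subgradient $L$ for a piecewise linear convex $u$ on the polytope $\D$; this is standard convex analysis provided $\mathbf{0} \in \mathrm{Int}(\D)$, which holds because $\D$ is reflexive.
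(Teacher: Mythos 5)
Your proof is correct and follows essentially the same route as the paper: necessity is obtained by testing $\mathcal I_\D$ against linear functions (the paper uses $u=\pm x_i$, you use $L(\mathbf{x})=-\langle\mathbf{b},\mathbf{x}\rangle$), and sufficiency by convexity at the origin, your subgradient inequality being exactly the supporting-hyperplane form of the Jensen inequality the paper invokes. You are in fact slightly more thorough, since you also verify the equality clause in the definition of Ding polystability (that $\mathcal I_\D(u)=0$ forces $u$ to be affine linear), a point the paper's proof leaves implicit.
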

\begin{proof}
Suppose $\D$ is Ding polystable. Hence
\begin{equation}\label{ineq:DingInv}
\frac{1}{\vol(\D)}\int_\D u({\boldsymbol{x}})\:dv\geqslant 0
\end{equation}
for any normalized convex piecewise linear function $u$.
Applying $\eqref{ineq:DingInv}$ to linear functions, i.e. $u=\pm x_i$ for $i=1, \ldots, n$ we conclude
$\int_\D {\boldsymbol{x}}\:dv=\bf 0$.

Conversely we assume that $\int_\D {\boldsymbol{x}}\:dv=\bf 0$. Then for any normalized convex piecewise linear function $u$, Jensen's inequality implies that
\[
\int_\D u({\boldsymbol{x}})\:dv \geqslant u(\int_\D {\boldsymbol{x}}\:dv) = u({\bf{0}})=0.
\]
Hence $\D$ is Ding polystable.
\end{proof}

\subsection{Asymptotic Chow stability of toric varieties}\label{sec:Chow Stability}
In this section, let us briefly recall notion of Chow stability, see \cite{Ono11, Yotsu16} for more details.

Let $X\subset \C P^N$ be an $n$-dimensional irreducible complex projective variety of degree $d\geqslant 2$.
Recall that for a projectively embedded $n$-dimensional complex subvariety $X\subset \C P^N$, the {\emph{degree}} $d$ of $X$ is a number of intersection of $X$
with a linear subspace $L$ in general position, such that $n+\dim L =N$.
Let us denote the Grassmann variety by $\mathbb G(k,\C P^N)$. We define the {\em associated hypersurface}
of $X\subset \C P^N$ by
\[
Z_X:=\set{L\in \mathbb G(N-n-1, \C P^N) | L\cap X\neq \emptyset }.
\]
Remark that the construction of $Z_X$ can be regarded as an analog of the projective dual varieties as in \cite[Chapter $1$]{GKZ94}. In fact, it is well known that $Z_X$ is an irreducible divisor in $\mathbb G(N-n-1, \C P^N)$ with $\deg Z_X=d$ in the Pl\"ucker coordinates.
Therefore there exists $R_X\in H^0(\mathbb G(N-n-1, \C P^N), \mathcal O_{\mathbb G}(d))$ such that
$Z_X=\set{R_X=0}$. We call $R_X$ the {\em $X$-resultant}.
Since there is a natural action of $\mathrm SL(N+1,\C)$ on $H^0(\mathbb G(N-n-1, \C P^N), \mathcal O_{\mathbb G}(d))$, we define GIT-stability for the $X$-resultant $R_X$ as follows.
\begin{definition}\label{def:ChowSta}\rm
Let $X\subset \C P^N$ be an $n$-dimensional irreducible complex projective variety.
$X$ is said to be {\em Chow semistable} if the closure of $\mathrm SL(N+1,\C)$-orbit of the $X$-resultant $R_X$
does not contain the origin. $X$ is said to be {\em Chow polystable} if the orbit $\mathrm SL(N+1,\C)\cdot R_X$
is closed. We call $X$ {\em Chow unstable} if it is not Chow semistable.
\end{definition}
\begin{definition}\label{def:Asymptotic Chow sta}\rm
Let $(X, L)$ be a polarized projective variety. For $i\gg 0$, we denote the Kodaira embedding by
$\Psi_i:X\rightarrow \P(H^0(X, L^{\otimes i})^*)$. $(X,  L)$ is said to be {\em asymptotically Chow semistable}
(resp. {\em polystable}) if there is an $i_0$ such that $\Psi_i(X)$ is Chow semistable (resp. polystable) for
each $i\geqslant i_0$.
$(X,  L)$ is called {\em asymptotically Chow unstable} if it is not asymptotically Chow semistable.
\end{definition}
Next we will give a quick review on Ono's necessary condition for Chow semistability of polarized toric varieties.
Let $\D$ be an $n$-dimensional lattice polytope in $M_{\R}\cong \R^n$. The {\em Euler-Maclaurin summation formula}
for polytopes provides a powerful connection between integral over a polytope $\D$ and summation of lattice points in $\D$. More specifically, for any polynomial function $\phi$ on $\R^n$, we would like to see how the summation
\[
\sum_{{\bf {a}} \in \D \cap (\Z /i)^n}\phi ({\bf{a}})=:I(\phi,\D)(i)
\]
will behave for a positive integer $i$. If we take $\phi$ to be $1$, $I(\phi,\D)(i)$ is so-called the Ehrhart polynomial which counts the number of lattice points in $i$-th dilation of a polytope $\D$:
\[
I(1, \D)(i)=\# (\D \cap (\Z/i)^n).
\]
Recall that the Ehrhart polynomial has an expression
\[
E_\D(t):=I(1,\D)(t)=\vol(\D)t^n+\frac{\vol(\p \D)}{2}t^{n-1}+\dots +1 
\]
where $\p \D$ is the boundary of a lattice polytope $\D$. Similarly if we take $\phi$ to be the coordinate functions
${\boldsymbol{x}}=(x_1,\ldots, x_n)$, then $I(\phi,\D)(i)$ counts the weight of lattice points in $i$-th dilation of a polytope $\D$:
\begin{equation}\label{eq:SumPoly}
I({\boldsymbol{x}},\D)(i)=\sum_{{\bf a}\in \D \cap (\Z/i)^n}{\bf a} .
\end{equation}
Similar to the Ehrhart polynomial, 
it is also known that $\eqref{eq:SumPoly}$ gives the $\R^n$-valued polynomial satisfying
\begin{align}
s_\D(t)&:=I({\boldsymbol{x}},\D)(t) \notag \\
&=t^n\int_\D {\boldsymbol{x}}\:dv+\frac{t^{n-1}}{2}\int_{\p \D} {\boldsymbol{x}}\:d\sigma+\dots +c, \qquad  s_\D(i)=\sum_{{\bf a}\in \D \cap (\Z/i)^n}{\bf a} \label{eq:SumPoly}
\end{align}
for any positive integer $i$.
We call $s_{\D}(t)$ the {\em lattice points sum polynomial}.
The following necessary condition of Chow semistability of projective toric varieties was obtained in \cite{Ono11}.
\begin{theorem}[Ono]
Let $\D$ be a lattice polytope, $E_\D(t)$ the Ehrhart polynomial and $s_\D(t)$ the lattice points sum polynomial.
We fix a positive integer $i\in \Z_{+}.$ If the associated toric variety $X$ with respect to $ L^{\otimes i}$ is Chow semistable,
then the following equality holds:
\begin{equation}\label{eq:Chow Weight}
s_{\D}(i)=\frac{E_\D (i)}{\vol(\D)}\int_\D {\boldsymbol{x}}\: dv.
\end{equation}
\end{theorem}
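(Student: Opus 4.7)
The plan is to apply the Hilbert-Mumford numerical criterion for Chow semistability to one-parameter subgroups coming from the torus $T_N$, after identifying the Chow weight with an explicit combinatorial expression in $s_\D$ and $E_\D$. Because $R_X$ is a section of $\mathcal O_{\mathbb G}(d)$ on which $\SL(N+1,\C)$ acts, Chow semistability forces the numerical weight of $R_X$ along every such 1-PS to have the correct sign.

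First, I would fix a convenient eigenbasis of the embedding. Using Demazure's description of sections of a torus-equivariant line bundle, $H^0(X,\mathbb L^i)$ admits a basis $\{z_a\}_{a\in i\D\cap M}$ consisting of $T_N$-weight vectors with $z_a$ of weight $a$. In particular $N+1=E_\D(i)$, and the Kodaira embedding $\Psi_i$ makes $T_N$ act diagonally on $\C^{N+1}$. For any $\xi\in N$, the 1-PS $\lambda_\xi:\C^\times\to T_N$ then acts on $z_a$ with weight $\bracket{\xi,a}$, whose total $\sum_{a\in i\D\cap M}\bracket{\xi,a}=i\bracket{\xi,s_\D(i)}$ is generally nonzero. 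To land inside $\SL(N+1,\C)$, I would subtract the mean and replace $\lambda_\xi$ by the traceless 1-PS $\tilde\lambda_\xi$ whose weight on $z_a$ equals
\[
\tilde w_a(\xi)=\bracket{\xi,a}-\frac{i\bracket{\xi,s_\D(i)}}{E_\D(i)}.
\]

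Second — and this is the technical heart of the argument — I would compute the Chow weight of $R_X$ under $\tilde\lambda_\xi$ via Mumford's asymptotic formula expressing it as the leading coefficient in $k$ of the Hilbert weight of $\tilde\lambda_\xi$ on $H^0(X,\mathbb L^{ik})$. For a toric variety this input is combinatorially transparent: the Hilbert weight equals $ik\bracket{\xi,s_\D(ik)}-\frac{i\bracket{\xi,s_\D(i)}}{E_\D(i)}E_\D(ik)$, a polynomial in $k$ of degree at most $n+1$. Substituting the polynomial expansions
\[
s_\D(t)=t^n\!\int_\D \mathbf{x}\,dx+O(t^{n-1}),\qquad E_\D(t)=\vol(\D)\,t^n+O(t^{n-1}),
\]
and isolating the coefficient of $k^{n+1}$ collapses the result, up to a positive constant $C=C(n,i)>0$, to the clean expression
\[
w^{\mathrm{Chow}}(\tilde\lambda_\xi)=C\cdot\Bigl\langle \xi,\ E_\D(i)\!\int_\D \mathbf{x}\,dx-\vol(\D)\,s_\D(i)\Bigr\rangle.
\]

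Third, Hilbert-Mumford forces $w^{\mathrm{Chow}}(\tilde\lambda_\xi)\geqslant 0$ for every $\xi\in N$. Replacing $\xi$ by $-\xi$ yields the reverse inequality, so the pairing vanishes for all $\xi\in N$; because $N$ spans $N_\R\cong\R^n$, this upgrades to the vector identity $E_\D(i)\int_\D \mathbf{x}\,dx=\vol(\D)\,s_\D(i)$, which after dividing by $\vol(\D)$ is exactly \eqref{eq:Chow Weight}. The main obstacle is step two: one has to verify that Mumford's asymptotic, when fed the toric Hilbert weights, really does extract the expression $E_\D(i)\int_\D\mathbf{x}\,dx-\vol(\D)\,s_\D(i)$ cleanly, without residual lower-order contamination, and one must track the sign of $C$ so that the Hilbert-Mumford inequality points in the correct direction. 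The remaining ingredients — the Demazure basis, the traceless renormalization, and the $\pm\xi$ trick — are essentially formal.
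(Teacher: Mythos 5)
The paper does not actually prove this statement: it is quoted as a known result from Ono \cite{Ono11}, so there is no internal proof to compare against. Your argument is, in outline, exactly Ono's: restrict the Hilbert--Mumford criterion to traceless one-parameter subgroups coming from $\xi\in N$, compute the Chow weight of $R_X$ as the normalized leading coefficient of the Hilbert weight polynomial on $H^0(X,\L^{ik})$ using the monomial basis indexed by $ik\D\cap M$, and use the $\pm\xi$ symmetry to turn the semistability inequality into the vector identity $E_\D(i)\int_\D\mathbf{x}\,dx=\vol(\D)\,s_\D(i)$. The one concrete flaw is in your displayed Hilbert weight: a degree-$k$ monomial in the homogeneous coordinates acquires $k$ copies of the traceless correction, so the subtracted term must be $k\cdot\frac{i\bracket{\xi,s_\D(i)}}{E_\D(i)}E_\D(ik)$, not $\frac{i\bracket{\xi,s_\D(i)}}{E_\D(i)}E_\D(ik)$. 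As written, that term contributes only at order $k^n$, the coefficient of $k^{n+1}$ collapses to $i^{n+1}\bracket{\xi,\int_\D\mathbf{x}\,dx}$, and your third step would then yield only $\int_\D\mathbf{x}\,dx=\mathbf 0$ rather than \eqref{eq:Chow Weight}. With the factor of $k$ restored, the leading coefficient is
\[
\frac{i^{n+1}}{E_\D(i)}\Bigl\langle \xi,\ E_\D(i)\!\int_\D \mathbf{x}\,dx-\vol(\D)\,s_\D(i)\Bigr\rangle,
\]
which is the expression you state at the end of your second step, and the remainder of the argument is correct.
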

Suppose a projective polarized toric variety $(X,  L)$ associated with a lattice polytope $\D$ is {asymptotically} Chow semistable. Then there is an $i_0\in \Z_{+}$ such that $\eqref{eq:Chow Weight}$ holds 
for any positive integer $i\geqslant i_0$. On the other hand, we observe that $E_\D(t)$ and $s_\D(t)$ are
($\R^n$-valued) polynomials. Hence polynomial identity theorem gives the following (see also \cite[Theorem $1.4$]{Ono11}).
\begin{lemma}\label{lem:AsympChowss}
Let $\D$ be a lattice polytope. If the associated projective polarized toric variety $(X, L)$ is asymptotically Chow semistable, then $\eqref{eq:Chow Weight}$ holds for {\em any} (not-necessarily-positive)
integer $i\in \Z$.
\end{lemma}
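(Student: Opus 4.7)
The plan is to reduce this to the polynomial identity theorem, which is already flagged as the key tool in the paragraph preceding the statement.

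First I would rewrite both sides of \eqref{eq:Chow Weight} as $\R^n$-valued polynomials in the variable $i$. By the Euler–Maclaurin setup recalled just above, $E_\D(t)$ is an honest polynomial in $t$ of degree $n$, and coordinate-wise $s_\D(t)$ is an $\R^n$-valued polynomial in $t$ of degree $n$ (with leading coefficient $\int_\D \mathbf{x}\,dx$, matching the leading behaviour one gets by approximating a Riemann sum by an integral on the rescaled lattice). In particular, setting
\[
P(t) := s_\D(t) \quad \text{and} \quad Q(t) := \frac{E_\D(t)}{\vol(\D)}\int_\D \mathbf{x}\,dx,
\]
the claim \eqref{eq:Chow Weight} amounts to the polynomial identity $P(t) \equiv Q(t)$ in $\R^n[t]$.

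Next I would invoke the hypothesis. By Definition~\ref{def:Asymptotic Chow sta}, asymptotic Chow semistability of $(X,\mathbb L)$ gives an $i_0 \in \Z_+$ such that $\Psi_i(X)\subset \P(H^0(X,\mathbb L^i)^*)$ is Chow semistable for every $i\geqslant i_0$. Ono's theorem (the Theorem immediately preceding the Lemma) then yields $P(i) = Q(i)$ for every integer $i\geqslant i_0$. Since $P-Q$ is a polynomial (coordinate-wise) with infinitely many zeros, each component vanishes identically, i.e. $P(t)\equiv Q(t)$ as $\R^n$-valued polynomials in $t$. Evaluating this identity at an arbitrary integer $i \in \Z$ (possibly negative or zero) yields \eqref{eq:Chow Weight} for all $i$, which is what the lemma asserts.

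I don't foresee a serious obstacle here; the only minor point that deserves care is the verification that $s_\D(t)$ really is a polynomial in the formal variable $t$ (not merely a function defined on $\Z_+$). This is the standard generalisation of Ehrhart reciprocity to weighted lattice-point sums: for any polynomial weight $\phi$ on $\R^n$, the function $i\mapsto \sum_{\mathbf{a}\in \D\cap(\Z/i)^n}\phi(\mathbf{a})$ agrees, for $i\in\Z_+$, with a polynomial in $i$ of degree $n+\deg\phi$; applying this with $\phi=\mathbf{x}$ produces the $\R^n$-valued polynomial $s_\D(t)$ written in the paper. Once this is taken for granted, the lemma follows by one line of polynomial identity.
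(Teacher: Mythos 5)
Your argument is exactly the paper's: asymptotic Chow semistability plus Ono's theorem gives $\eqref{eq:Chow Weight}$ for all integers $i\geqslant i_0$, and since both sides are ($\R^n$-valued) polynomials in $i$ agreeing at infinitely many points, the polynomial identity theorem forces them to agree for every $i\in\Z$. One trivial slip: for the normalization used here (summing over $\D\cap(\Z/i)^n$ rather than over $i\D\cap\Z^n$), the weighted sum $I(\phi,\D)(i)$ is a polynomial of degree $n$, not $n+\deg\phi$ — but this has no bearing on the argument, which only needs polynomiality.
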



\section{Proof of Theorem $\ref{thm:Main}$}\label{sec:proof}
\subsection{Ehrhart reciprocity law for polynomial functions}
Let $\D$ be an $n$-dimensional lattice polytope in $M_\R\cong \R^n$ and $\phi$ a polynomial function on $\R^n$.
As in Section \ref{sec:Chow Stability}, we consider
\[
I(\phi,\D)(i)=\sum_{{\bf{a}}\in \D \cap (\Z/i)^n}\phi({\bf{a}})
\]
and
\[
I(\phi,{\mathrm{Int}} (\D))(i)=\sum_{{\bf{a}}\in {\mathrm{Int}} (\D) \cap (\Z/i)^n}\phi({\bf{a}})
\]
for a positive integer $i$.
Remark that $I(1,{\mathrm{Int}} (\D))(i)=\# ({\mathrm{Int}} (\D) \cap (\Z/i)^n)$.
The classical result of the {\em Ehrhart reciprocity law} says that the following equality holds for any positive
integer $i\in \Z_{+}$:
\[
I(1,{\mathrm{Int}} (\D))(i)=(-1)^n I(1,\D)(-i).
\]
Brion and Vergne gave the following beautiful generalization of this reciprocity law \cite{BV97}.
\begin{theorem}[Brion-Vergne]
Let $\D$ be an $n$-dimensional lattice polytope.
If $\phi$ is a homogeneous polynomial function of degree $d$ on $\D$, then the following reciprocity law 
\begin{equation}\label{eq:BV97}
I(\phi,{\mathrm{Int}} (\D))(i)=(-1)^{n+d} I(\phi,\D)(-i)
\end{equation}
holds for any positive integer $i\in \Z_+$.
\end{theorem}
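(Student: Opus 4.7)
The plan is to establish $\eqref{eq:BV97}$ by combining the additivity of polytope valuations with Brion's theorem on generating functions over rational cones. First I would reduce to the simplicial case: both $\D \mapsto I(\phi, \D)(i)$ and $\D \mapsto I(\phi, \mathrm{Int}(\D))(i)$ extend to valuations on the algebra of lattice polytopes in the sense of McMullen (boundary-point contributions cancel in alternating sums along a lattice subdivision), so fixing a lattice triangulation $\D = \bigcup_k \sigma_k$ and taking the alternating sum reduces the identity to the case of a single lattice simplex $\sigma$ with vertices $v_0, \ldots, v_n$.

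For the simplex, the central tool is Brion's formula
\[
\sum_{a \in i\sigma \cap \Z^n} e^{\langle a, \xi\rangle} = \sum_{k=0}^n e^{\langle iv_k, \xi\rangle}\, G_k(\xi),
\]
where $G_k(\xi)$ is the rational generating function of the tangent cone at $v_k$ expressed through its edge vectors $u_{k,\ell}$, together with the parallel interior formula in which factors $e^{-\langle u_{k,\ell}, \xi\rangle}$ enforce the open condition. Stanley's reciprocity for simplicial cones gives $G_k^{\mathrm{int}}(\xi) = (-1)^n G_k(-\xi)$, and combining this with the invariance of $e^{\langle iv_k, \xi\rangle}$ under $(i,\xi) \mapsto (-i,-\xi)$ yields the generating-function version of the desired reciprocity. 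To pass to polynomial weights, I would expand $e^{\langle a, \xi\rangle}$ as a formal power series in $\xi$ and match $\xi^\alpha$-coefficients: for a monomial $\phi(x) = x^\alpha$ of degree $d = |\alpha|$, $\phi(a)$ is $\alpha!$ times the $\xi^\alpha$-coefficient of $e^{\langle a, \xi\rangle}$, and the substitution $\xi \mapsto -\xi$ contributes an additional $(-1)^d$ that combines with the $(-1)^n$ from cone reciprocity to produce the total sign $(-1)^{n+d}$ appearing in $\eqref{eq:BV97}$. Linearity in $\phi$ extends the identity from monomials to an arbitrary homogeneous polynomial of degree $d$.

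The main obstacle is the analytic delicacy of this generating-function manipulation: each $G_k(\xi)$ is meromorphic with poles along the hyperplanes $\langle u_{k,\ell}, \xi\rangle = 0$, and only the full sum over $k$ extends regularly across $\xi = 0$. One therefore has to work in the ring of meromorphic germs at the origin and verify that after cancellation the Taylor expansion at $\xi = 0$ is well defined; this is most cleanly handled via the local Euler-Maclaurin formalism of Khovanskii-Pukhlikov. Once this regularity is secured, polynomiality of $\sum_{a \in i\sigma \cap \Z^n}\phi(a)$ in $i$ follows automatically since each $e^{\langle iv_k, \xi\rangle}$ contributes only polynomial factors in $i$ term by term, and evaluating the resulting identity at positive integers $i$ delivers $\eqref{eq:BV97}$ as a genuine reciprocity of Laurent polynomials in $i$.
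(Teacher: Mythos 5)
The paper gives no proof of this theorem: it is imported verbatim from Brion--Vergne \cite{BV97} and used as a black box in Lemma \ref{lem:Reciprocity}, so there is no in-paper argument to compare yours with. Your strategy --- reduce to simplices, apply Brion's formula to the tangent cones, use Stanley reciprocity for simplicial cones, and extract polynomial weights from the Taylor expansion in $\xi$ --- is the standard modern proof of weighted Ehrhart reciprocity and is viable in outline; Brion and Vergne's own argument runs instead through the Euler--Maclaurin/Todd-operator formalism that you mention as a fallback for the regularity issue at $\xi=0$.

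There is, however, one concrete mismatch you must resolve. Your generating function $\sum_{a\in i\sigma\cap\Z^n}e^{\langle a,\xi\rangle}$ computes the \emph{unnormalized} weighted sum $\sum_{m\in i\D\cap\Z^n}\phi(m)$, and the reciprocity you derive for it, with sign $(-1)^{n+d}$, is the correct classical statement for that quantity. But the paper defines $I(\phi,\D)(i)=\sum_{a\in\D\cap(\Z/i)^n}\phi(a)$, i.e.\ $\phi$ is evaluated at the rescaled points $m/i$; for $\phi$ homogeneous of degree $d$ this differs from your sum by $i^{-d}$ on each side, and under $i\mapsto -i$ that factor converts the sign to $(-1)^{n}$. (Test $\D=[0,1]$, $\phi(x)=x$, $i=2$: one finds $I(\phi,\mathrm{Int}(\D))(2)=1/2$ while $(-1)^{n+d}I(\phi,\D)(-2)=-1/2$.) So your final step silently identifies two statements that differ by $(-1)^d$; you need either to prove the normalized version directly or to record the conversion, noting that the sign discrepancy is harmless for the paper's application since there the right-hand side is $\mathbf{0}$. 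A smaller loose end: closed simplices of a triangulation overlap along faces, so the reduction to a single simplex is not a one-line additivity remark --- you need inclusion--exclusion over the faces together with induction on dimension, or a decomposition of $\D$ into half-open simplices; both are standard but one must be carried out.
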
  
we use this result for proving the following.
\begin{lemma}\label{lem:Reciprocity}
Let $\D$ be an $n$-dimensional reflexive polytope in $M_\R$.
Let $E_\D(t)$ be the Ehrhart polynomial, $s_\D(t)$ the lattice point sum polynomial respectively.
Then we have
\[
E_\D(-1)=(-1)^n \qquad \qquad \text{and}  \qquad \qquad s_\D(-1)={\bf{0}}.
\]
\end{lemma}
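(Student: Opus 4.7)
The plan is to deduce both identities directly from the Brion--Vergne reciprocity law $\eqref{eq:BV97}$ applied at $i=1$, using the defining feature of a reflexive polytope that its only interior lattice point is the origin $\mathbf{0}$.

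For the first equality, I would take $\phi \equiv 1$, which is homogeneous of degree $d=0$. Then $\eqref{eq:BV97}$ at $i=1$ reads
\[
\#\bigl(\mathrm{Int}(\D)\cap \Z^n\bigr) = I(1,\mathrm{Int}(\D))(1) = (-1)^{n} I(1,\D)(-1) = (-1)^n E_\D(-1).
\]
Since $\D$ is reflexive, $\mathbf{0}$ is the unique interior lattice point of $\D$, so the left-hand side equals $1$ and hence $E_\D(-1) = (-1)^n$.

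For the second equality, I would apply $\eqref{eq:BV97}$ component-wise to the coordinate functions $\phi(\mathbf{x})=x_j$, each of which is homogeneous of degree $d=1$. At $i=1$ this gives
\[
\sum_{\mathbf{a}\in \mathrm{Int}(\D)\cap\Z^n}\mathbf{a} = I(\mathbf{x},\mathrm{Int}(\D))(1) = (-1)^{n+1} I(\mathbf{x},\D)(-1) = (-1)^{n+1} s_\D(-1).
\]
Again by reflexivity, $\mathrm{Int}(\D)\cap \Z^n = \{\mathbf{0}\}$, so the left-hand side vanishes and therefore $s_\D(-1) = \mathbf{0}$.

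The argument is essentially a direct substitution, so no step is technically delicate. The only subtlety worth pointing out clearly in the write-up is the justification that $\mathbf{0}$ is the unique interior lattice point of a reflexive polytope, which is a standard consequence of the fact that the vertices of the dual polytope are lattice points and that the facet inequalities of $\D$ have the form $\langle x, v \rangle \geqslant -1$; any other interior lattice point would violate these at strict inequality, forcing integrality contradictions on the dual side. Invoking this fact (as already recorded in Section~\ref{sec:GorToricFano}) makes both computations one-line applications of Brion--Vergne.
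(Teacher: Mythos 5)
Your proof is correct and follows exactly the same route as the paper: apply the Brion--Vergne reciprocity law $\eqref{eq:BV97}$ at $i=1$ with $\phi=1$ and $\phi=\mathbf{x}$, and use that $\mathrm{Int}(\D)\cap\Z^n=\{\mathbf{0}\}$ for a reflexive polytope. Your additional remark justifying the uniqueness of the interior lattice point is a welcome, if standard, clarification.
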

\begin{proof}
We note that ${\mathrm{Int}} (\D)\cap \Z^n=\set{\bf 0}$ since $\D$ is a reflexive polytope.
Taking $\phi=1$ and $i=1$ in $\eqref{eq:BV97}$, we have
\[
E_\D(-1)=(-1)^n\cdot \# ({\mathrm{Int}} (\D) \cap \Z^n) = (-1)^n.
\]
Similary, if we take $\phi={\boldsymbol{x}}$ and $i=1$, then $\eqref{eq:BV97}$ becomes
\[
s_\D(-1)=(-1)^{n+1}\cdot \sum_{{\bf{a}}\in {\mathrm{Int}} (\D) \cap \Z^n}{\bf{a}} = {\bf{0}}.
\]
\end{proof}

\subsection{A combinatorial proof}
Now we prove Theorem $\ref{thm:Main}$.
\begin{proof}[Proof of Theorem $\ref{thm:Main}$]
If a Gorenstein toric Fano variety $(X, -K_X)$ is asymptotically Chow semistable, then
$\eqref{eq:Chow Weight}$ holds for any integer $i\in \Z$, by Lemma $\ref{lem:AsympChowss}$.
Taking $i=-1$ in $\eqref{eq:Chow Weight}$, we have
\[
\int_\D {\boldsymbol{x}}\: dv={\bf{0}}
\]
by Lemma $\ref{lem:Reciprocity}$. Thus Proposition $\ref{prop:YaoYi}$ implies that $(X, -K_X)$
is Ding polystable. This completes the proof.
\end{proof}

\subsection{Conclusion of the proof of Theorem \ref{thm:Main}}
If $\D$ is a simple reflexive polytope, then the corresponding toric Fano variety $(X, -K_X)$
may admit only orbifold singularities. Combining Theorem $\ref{thm:Main}$ and the result of \cite{SZ12, CS15},
we conclude the following. 
\begin{corollary}
Let $X$ be a toric Fano orbifold. If $(X, -K_X)$ is asymptotic Chow semistable, then $X$ admits
a K\"ahler-Einstein metric in $c_1( -K_X)$.
\end{corollary}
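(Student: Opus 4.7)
The plan is to combine our main theorem with two further ingredients whose orbifold versions are available in the literature. First, since $\D$ is a simple reflexive polytope, the associated toric Fano orbifold $(X,-K_X)$ is still Gorenstein (reflexivity is precisely the combinatorial encoding of Gorenstein Fano), and the only feature distinguishing it from the smooth case is that some vertex cones may be non-unimodular, producing orbifold points. Consequently Theorem \ref{thm:main} applies verbatim: the hypothesis of asymptotic Chow semistability yields that $(X,-K_X)$ is Ding polystable, which by Proposition \ref{prop:YaoYi} is equivalent to the barycenter of $\D$ being the origin.

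The next step is to promote this to K\"ahler-Einstein existence. In the smooth case Theorem \ref{thm:BBJ-Fuj} identifies Ding polystability with $K$-polystability, and an orbifold version would be needed in general. In the toric setting, however, one can bypass such an upgrade: the barycenter condition just obtained is precisely the hypothesis under which \cite{SZ12, CS15} produce a K\"ahler-Einstein metric on a toric Fano orbifold. Plugging the barycenter identity into their existence result then delivers a K\"ahler-Einstein metric in $c_1(-K_X)$, completing the proof.

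The only real obstacle is bookkeeping rather than a new idea: one has to make sure that each ingredient (Theorem \ref{thm:main}, Proposition \ref{prop:YaoYi}, and the K\"ahler-Einstein existence criterion) is valid in the orbifold category and not merely for smooth toric Fano manifolds. Since the toric dictionary collapses the stability problem into the purely combinatorial statement that the barycenter of $\D$ vanishes, this verification reduces to a direct citation of \cite{SZ12, CS15}, with no additional analytic input required.
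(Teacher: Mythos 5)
Your proposal is correct and follows the same overall strategy as the paper: apply Theorem \ref{thm:main} to get Ding polystability, then hand the conclusion to the toric orbifold existence results of \cite{SZ12, CS15}. The one place you genuinely diverge is the middle step, and your version is arguably the more careful one. The paper's stated route is ``combine Theorem \ref{thm:BBJ-Fuj} with \cite{SZ12, CS15}'', i.e.\ it passes from Ding polystability to $K$-polystability via the Berman--Boucksom--Jonsson/Fujita equivalence; but Theorem \ref{thm:BBJ-Fuj} is stated in the paper only for Fano \emph{manifolds}, so invoking it for an orbifold strictly requires the singular (log Fano) version of that equivalence. You sidestep this entirely by using Proposition \ref{prop:YaoYi} to convert Ding polystability into the purely combinatorial statement that the barycenter of $\D$ vanishes, and then feeding that barycenter condition (equivalently, the vanishing of the Futaki invariant in the toric setting, as in \cite{WZ04}) directly into the orbifold existence theorem of \cite{SZ12}. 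This keeps every step either combinatorial or explicitly established in the orbifold category, at the cost of leaning on the barycenter formulation of the existence criterion rather than on abstract $K$-polystability. Your preliminary check that a simple reflexive polytope still yields a Gorenstein variety, so that Theorem \ref{thm:main} and Lemma \ref{lem:Reciprocity} apply verbatim, is exactly the bookkeeping the paper leaves implicit.
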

We finish this section with the following example which illustrates the combinatorial proof of Theorem \ref{thm:Main} by using a Gorenstein toric del Pezzo surface. 
\begin{example}\rm
Let $\D$ be the polygon in $M_\R\cong \R^2$ whose vertices are given by
\[
\Set{\begin{pmatrix}
1 \\ 0
\end{pmatrix},
\begin{pmatrix}
0 \\ 1
\end{pmatrix},
\begin{pmatrix}
-1 \\ -1
\end{pmatrix}
},
\]
which is the polytope labeled with $9$ in Table \ref{table:Combinatorics}. Then the associated polarized toric variety $(X,L)$ is the cubic surface  
\[
X=\Set{[x:y:z:w]\in \P^3| xyz=w^3}
\]
with the anticanonical line bundle $L=\mathcal O_X(-K_X)$. It is known that $(X,L^{\otimes i})$ is Chow polystable for any integer $i>0$ by Theorem $1.2$ $(3)$ in \cite{LLSW19}.
Thus, $(X,-K_X)$ is asymptotically Chow semistable.

Let us compute the $\R^2$-valued polynomial function $s_\D(t)$. Firstly, the straight forward computation shows that
\[
\int_\D \bs x\, dv=
\begin{pmatrix}
0 \\0
\end{pmatrix}
\]
for the standard volume form $dv=dx\wedge dy$ of $M_\R$.
Secondary, we shall compute $\int_{\p\D}\bs x d\sigma$ (which is equal to the second leading coefficient of $s_\D(t)$ ) as follows:
the polygon $\D$ has three facets $F_i=\set{\bs x\in \D| \ell_i(\bs x)=0}$ for $i=1,2,3$ whose defining equations are given by
\[
\ell_1(\bs x)=1-x-y, \quad \ell_2(\bs x)=1+2x-y, \quad \text{and} \quad \ell_3(\bs x)=1-x+2y,
\]
respectively. Then the boundary measure $d\sigma_i$ on each facet $F_i$ is determined by
\begin{equation}\label{eq:BoundMes2}
dv=\pm d\sigma_i\wedge d\ell_i.
\end{equation}
Thus, $\eqref{eq:BoundMes2}$ yields that we can take
\[
d\sigma_1=-dx, \quad d\sigma_2=-dx, \quad \text{and} \quad d\sigma_3=\frac{1}{2}dx 
\]
as the boundary measures on $\p \D$. Consequently, the $x$-coordinates of the barycenter of each facet $F_i$ is given by
\begin{align*}
\int_{F_1}x\,d\sigma_1&=\int_1^0x(-dx)=\frac{1}{2},\qquad \int_{F_2}x\,d\sigma_2=\int_{-1}^0x\,dx=-\frac{1}{2}\\
\text{and}\qquad \int_{F_3}x\,d\sigma_3&=\frac{1}{2}\int_{-1}^1x\,dx=0,
\end{align*}
respectively. By the symmetry of $\D$, we find that
\[
\int_{\p \D}\bs x\,d\sigma=\begin{pmatrix}
\frac{1}{2} \\ \vspace{-0.4cm}\\ \frac{1}{2}
\end{pmatrix}+\begin{pmatrix}
-\frac{1}{2} \\ \vspace{-0.4cm}\\ 0
\end{pmatrix}+
\begin{pmatrix}
0 \\ \vspace{-0.4cm}\\ -\frac{1}{2}
\end{pmatrix}=\begin{pmatrix}
0\\ 0
\end{pmatrix}.
\]
Hence, $s_\D(t)$ has the form of
\begin{equation}\label{eq:SumPoly2}
s_\D(t)=\begin{pmatrix}
0\\ 0
\end{pmatrix}t^2+
\begin{pmatrix}
0\\ 0
\end{pmatrix}t+\begin{pmatrix}
c_1\\ c_2
\end{pmatrix}
\end{equation}
for some constants $c_1$ and $c_2$. See, $\eqref{eq:SumPoly}$.
In order to determine $c_1$ and $c_2$, we plug the value of $s_\D(1)=\begin{pmatrix}
0\\ 0
\end{pmatrix}$ into $\eqref{eq:SumPoly2}$ which yields that $c_1=c_2=0$. Thus, we see that $s_\D(t)\equiv \bs 0$ and this is consistent with the Ehrhart reciprocity low
\[
s_\D(-1)=(-1)^3\cdot \sum_{\bs a\in \mathrm{Int}(\D)\cap \Z^2}\bs a=\begin{pmatrix}
0\\ 0
\end{pmatrix}.
\]
Moreover, we already see that $\int_\D\bs x\,dv=\begin{pmatrix}
0\\ 0
\end{pmatrix}$ in the above computation. Consequently, $(X,-K_X)$ is Ding polystable by Proposition \ref{prop:YaoYi}.
\end{example}

\section{Relative algebro-geometric stability}\label{sec:ARCS}
In order to deal with the existence problem of extremal K\"ahler metrics, the definition of K-stability was extended by Sz\'ekelyhidi in \cite{Sz07} to K\"ahler classes with non-vanishing Futaki invariant 
which was called {\emph{relative K-stability}}. Analogously, we can extend the notion of Chow stability to {\emph{relative Chow stability}} which has been also investigated by many researchers \cite{Se17,Ha19}.

In this section, we study relative Chow/K-stability of toric Fano varieties which were dealt with in \cite{YZ19,NSY23}.
The product formulas for potential functions $\theta_\D$ and the additivity of the constant $M_{X_\D}$ defined in $\eqref{eq:const}$ are discussed in Section $\ref{sec:Product}$.
Then in Section $\ref{sec:ChowToricdP}$, we verify (asymptotic) relative Chow stability of Gorenstein toric del Pezzo surfaces, by applying our combinatorial criterion of relative Chow stability (see, Corollary $\ref{cor:YZ}$)  in the toric setting,
and list the results in Table $\ref{table:RelChowSta}$. In Section $\ref{sec:relDK}$, we systematically construct examples of relatively K-polystable toric Fano manifolds,
but which are relatively Ding unstable, building upon the works of \cite{NSY23} and \cite{OSY23}. 
See Corollary $\ref{cor:ExtProd}$ and Example $\ref{ex:Y23}$ for more details.

\subsection{Fundamental results on relative Chow stability}\label{sec:Fundamental}
Firstly, we quick review notion of relative Chow stability and related results. See \cite{YZ19} for more detail.

Let us consider a reductive complex algebraic group $G$ with Lie algebra $\mathfrak g$.
Suppose $G$ acts linearly on a finite dimensional complex vector space $\mathbf V$.
This induces a natural $G$-action on $\mathbb P(\mathbf V)$. We will abbreviate $v\in \mathbb P(\mathbf V)$
and its representatives in $\mathbf V$. Let $T$ be a torus in $G$ with Lie algebra $\mathfrak t$.
We assume that $T$ fixes the point $v$.
Using an inner product $\bracket{\;,\;}$ and the Lie bracket $[\;,\:]$, we define subalgebras of $\mathfrak g$ by
\begin{align*}
\mathfrak{g}_{T}&=\set{\alpha \in \mathfrak g | [\alpha, \beta]=0 \;\text{ for all }\; \beta\in \mathfrak t},\\
\mathfrak{g}_{T^{\perp}}&=\set{\alpha \in \mathfrak g_T | \bracket{\alpha, \beta}=0 \; \text{ for all } \; \beta\in \mathfrak t}.
\end{align*}
Then the corresponding  Lie group of $\mathfrak g_T$ (resp. $\mathfrak{g}_{T^{\perp}}$)
is denoted by $G_T$ (resp. $G_{T^{\perp}}$). Following classical GIT (see Section $\ref{sec:Chow Stability}$),
we call $v\in \mathbb P(\mathbf V)$ is {\emph{semistable relative to $T$}} if the closure of $G_{T^{\perp}}$ orbit
$\mathcal O_{G_{T^{\perp}}}(v)$ does not contain the origin.
$v$ is {\emph{polystable relative to $T$}} if $\mathcal O_{G_{T^{\perp}}}(v)$ is closed orbit.
$v$ is said to be {\emph{unstable relative to $T$}} if it is not semistable relative to $T$.

Let us consider relative stability of the Chow form.
For an irreducible complex projective variety $X\subset \C P^N$, we choose $G=\mathrm{SL}(N+1, \C)$ and
$T$ to be the $\C^{\times}$-action induced by extremal vector field.
\begin{definition}\rm
A complex irreducible projective variety $X\subset \C P^N$ is said to be {\emph{relatively Chow polystable
(resp. semistable, unstable)}} if the $X$-resultant $R_X$ of $X$ is $\mathrm{SL}(N+1, \C)$-polystable
(resp. semistable, unstable) relative to $T$.
\end{definition}
The definition of asymptotic relative Chow stability is analogous to Definition $\ref{def:Asymptotic Chow sta}$,
hence we do not repeat the definition in this paper (see \cite[Definition $3.6$]{YZ19}).

\subsection{Toric reduction of relative Chow stability}\label{sec:ToricRelChow}
We consider the toric case. In particular, we are interested in the case where $X$ is an $n$-dimensional Gorenstein
toric Fano variety with the associated reflexive polytope $\D \subseteq M_{\R}\cong \R^n$.
As in \cite{YiYao17}, {\em {the Ricci affine function}} $\ell_{\D}$ associated to $\D$ is the unique function
determined by $\int_{\D}\ell_{\D}u \:dv=u(0)$ for any affine linear function $u$, namely, one can solve the linear system
\[
\int_{\D}\ell_{\D}({\boldsymbol{x}}) \:dv=1, \qquad \int_{\D}\ell_{\D}({\boldsymbol{x}}) \cdot x_i \:dv=0 \qquad \text{for} \qquad
i=1,\ldots,n
\]
in order to find $\ell_{\D}({\boldsymbol{x}}) =\sum a_ix_i+c$ with $a_i$ and $c$.
Let us define the {\emph{potential function}} of $\D$ by
\begin{equation}\label{def:potential}
\theta_\D:=1-\vol(\D)\ell_\D.
\end{equation} 
Then, we consider its average
\[
\bar{\theta}_\D=\frac{1}{N+1}\sum_{j=1}^{N+1}\theta_\D(\mathbf{a}_j),
\]
where $\set{\mathbf{a}_1, \ldots ,\mathbf{a}_{N+1}}$ are lattice points in $\D$.
Denoting
\[
d_\D=(1,\ldots,1), \qquad \tilde{\theta}_\D=((\theta_\D(\mathbf{a}_1)-\bar{\theta}_\D),\ldots, (\theta_\D(\mathbf{a}_{N+1})-\bar{\theta}_\D))
\]
in $\R^{N+1}$, we can show the following.
\begin{theorem}[Theorem $3.8$ in \cite{YZ19}]\label{thm:YZ}
Let $\mathrm{Ch}(\D)$ be the Chow polytope of an $n$-dimensional Gorenstein toric Fano variety $X_{\D}\subset \C P^N$. Then $X_\D$ is relatively Chow polystable in the toric sense if and only if there exists $t\in \R$ such that
\begin{equation}\label{eq:ChowPoly}
\frac{(n+1)!\vol(\D)}{N+1}(d_\D+t\tilde{\theta}_\D)\in \mathrm{Int}(\mathrm{Ch}(\D)).
\end{equation}
\end{theorem}
Let $ \bar \theta_{i\D} =\frac{1}{E_\D(i)} \sum_{\mathbf{a}\in \D\cap (\Z/i)^n}\theta_\D(\frac{\mathbf{a}}{i})$.
Defining $d_{i\D}$ and $\tilde{\theta}_{i\D}$ by 
\[d_{i\D}(\mathbf{a})=1, \quad \tilde{\theta}_{i\D}(\mathbf{a})=\frac{\theta_\D(\mathbf{a})-\bar{\theta}_{i\D}}{i},
\quad \text{ for } \quad  \mathbf{a}\in \D\cap (\Z/i)^n, \] 
we obtain a necessary condition for the associated polarized toric variety to be asymptotically relatively Chow semistable.
\begin{corollary}[Corollary $3.11$ in \cite{YZ19}]\label{cor:YZ}
If $(X_\D, \mathcal -K_{X_\D})$ is asymptotically relatively Chow semistable,
then for any $i\in \Z_+$, there exists $t_i\in \R$ satisfying 
\begin{equation}\label{eq:relChowWeight}
\sum_{\mathbf{a}\in \D\cap (\Z/i)^n}i\mathbf{a}+t_i\sum_{\mathbf{a}\in \D\cap (\Z/i)^n}
\tilde{\theta}_{i\D}(\mathbf{a})\mathbf{a}=\frac{iE_{\D}(i)}{\vol(\D)}\int_\D \boldsymbol{x}\, dv.
\end{equation}
\end{corollary}

\subsection{Product formulas for potential functions}\label{sec:Product}
Recently, Ono, Sano and the author proved that the only Bott manifolds such that the Futaki invariant vanishes for any K\"ahler class are isomorphic to the products of the projective lines \cite{OSY23}.
The key to prove the main theorem in \cite{OSY23} is the analysis of the product of two polytopes. By applying this technique to potential functions in $\eqref{def:potential}$, we derive the product formula in this section.

Now let us discuss the {\emph{product}} of two (or more) convex polytopes. For this, we consider the full dimensional polytopes $\D_1\subseteq \R^{n_1}$ and $\D_2\subseteq \R^{n_2}$,
and define
\[
\D_1\times \D_2:=\Set{
\begin{pmatrix}
\bs x \\
\bs y
\end{pmatrix}\in \R^{n_1+n_2}| \bs x=(x_1, \ldots, x_{n_1})\in \D_1,~~ \bs y=(y_1, \ldots, y_{n_1})\in \D_2
}.
\]
Setting $\D=\D_1\times \D_2$, we see that $\D$ is a polytope of dimension $n_1+n_2(=n)$, whose any nonempty face is given by the product of a nonempty face $F$ of $\D_1$, and a nonempty 
face $G$ of $\D_2$. For $i=1,2$, let $dv_i$ be the standard volume form of $\R^{n_i}$. Then, $dv=dv_1\wedge dv_2$ defines the volume form of $\D$.

For a given arbitrary (not necessarily product) convex polytope $P$ with $\dim P=n$, we consider the functional $\mathscr{L}_P(u)$ defined by
\begin{equation}\label{def:NAKfun}
\mathscr{L}_P(u)=\int_{\partial P}u\,d\sigma -\int_P\left(\frac{\vol(\partial P)}{\vol(P)}+\theta_P \right)u\,dv.
\end{equation}
Here $u$ is a convex function, $\theta_P$ is the potential function defined in $\eqref{def:potential}$, and $d\sigma$ is the $(n-1)$-dimensional Lebesgue measure of $\p P$ defined as follows:
let $\ell_j(\bs x)=\bracket{\bs x, v_j}+c_j$ be the defining equation of a facet $F_j$ of $P$, where $c_j\in \Z$ and $v_j$ is a primitive vector.
Recall that $dv=dx_1\wedge \cdots \wedge dx_n$ is the standard volume form of $\R^n$. On each facet $F_j=\set{\bs x\in P| \ell_j(\bs x)=0}\subset \p P$, we define the $(n-1)$-dimensional Lebesque measure $d\sigma_j$ of $\p P$ by
\begin{equation}\label{eq:BoundMes}
dv=\pm d\sigma_j\wedge d\ell_j.
\end{equation}
Then $d\sigma$ is uniquely determined as the $(n-1)$-dimensional Lebesgue measure of $\p P$ so that $d\sigma_j=\restrict{d\sigma}{F_j}$, up to the sign.

Let us go back to the product polytope $\D=\D_1\times \D_2$. Let $d\sigma_1$ (resp. $d\sigma_2$) be the $(n_1-1)$-dimensional (resp. $(n_2-1)$-dimensional) Lebesgue measure of $\p \D_1$
(resp. $\p \D_2$) defined in $\eqref{eq:BoundMes}$. Since any nonempty face of $\D$ is obtained by the product of a nonempty face $F\preceq \D_1$ and a nonempty face $G\preceq \D_2$, we see that the boundary of $\D$
is written as 
\begin{equation}\label{eq:Bound_Prod}
\p \D=\p \D_1\times \D_2 \cup \D_1\times \p \D_2.
\end{equation}
Also see, $(4.18)$ in \cite{OSY23}. In particular, we find the following equalities by direct computations.
\begin{lemma}\label{lem:Prod}
Let $\D=\D_1\times \D_2$ be the product of two polytopes $\D_k$ with $\dim \D_k=n_k$ for $k=1,2$.
Let $\bs x=(x_1, \ldots , x_{n_1})$ and $\bs y=(y_1, \ldots , y_{n_2})$ be the coordinates of $\D_1$ and $\D_2$ respectively.
We denote the volume form of $\D$ (resp. $\D_k$) by $dv$ (resp. $dv_k$), and the volume form of $\p \D$ (resp. $\p \D_k$) by $d\sigma$ (resp. $d\sigma_k$).
For $i=1, \ldots, n_1$ and $j=1, \ldots, n_2$, we have
\begin{align*}
\vol(\D)&=\vol(\D_1)\vol(\D_2), \\
\int_\D x_i\, dv&=\vol(\D_2)\int_{\D_1}x_i\, dv_1, \qquad  \int_\D y_j\, dv=\vol(\D_1)\int_{\D_2}y_j\, dv_2, \\
\vol(\p \D)&=\vol(\p \D_1)\vol(\D_2)+\vol( \D_1)\vol(\p \D_2), 
\end{align*}
\begin{align*}
\int_{\p \D}x_i\, d\sigma &= \vol(\D_2)\int_{\p \D_1}x_i \, d\sigma_1+\vol(\p \D_2)\int_{\D_1}x_i \, dv_1, \qquad \text{and}\\
\int_{\p \D}y_j\, d\sigma &= \vol(\D_1)\int_{\p \D_2}y_j \, d\sigma_2+\vol(\p \D_1)\int_{\D_2}y_j \, dv_2.
\end{align*}
\end{lemma}
We finish this subsection with the following additive property of the potential functions $\theta_\D$ and the Mabuchi constants $M_{X_\D}$ for the product polytopes.
\begin{proposition}\label{prop:Prod}
Let $\D=\D_1\times \D_2$ be the product of two polytopes as in Lemma \ref{lem:Prod}.
Then the potential function $\theta_\D$ defined in $\eqref{def:potential}$ satisfies the equality
\[
\theta_\D(\bs x, \bs y)=\theta_{\D_1}(\bs x)+\theta_{\D_2}(\bs y).
\]
Moreover, for the product $\D=\prod_{k=1}^r \D_k$, we see that $\theta_\D(\bs x_1, \bs x_2, \ldots , \bs x_r)=\sum_{k=1}^r\theta_{\D_k}(\bs x_k)$.
\end{proposition}
\begin{proof}
As it was described in \cite[p.$496$]{YZ19}, the potential function $\theta_\D$ is uniquely determined by solving the $n+1$-linear system
\begin{equation}\label{condi:LS}
\mathscr{L}_{\D}(1)=0, \quad \mathscr{L}_{\D}(x_i)=0, \quad \mathscr{L}_{\D}(y_j)=0 \qquad \text{for} \quad i=1, \ldots, n_1, ~~  j=1, \ldots, n_2,
\end{equation}
where $\mathscr{L}_\D(u)$ is the function defined in $\eqref{def:NAKfun}$.
Since $\theta_{\D_k}$ is the potential function of $\D_k$ for each $k=1,2$, we have
\begin{equation}\label{eq:assump}
\mathscr{L}_{\D_1}(1)=\mathscr{L}_{\D_2}(1)=0, \quad \mathscr{L}_{\D_1}(x_i)=0, \quad \text{and} \quad \mathscr{L}_{\D_2}(y_j)=0.
\end{equation}     
In order to prove our assertion, it suffices to show that $\theta_\D(\bs x, \bs y):=\theta_{\D_1}(\bs x)+\theta_{\D_2}(\bs y)$ satisfies
the $(n+1)$-equalities in $\eqref{condi:LS}$ using our assumption $\eqref{eq:assump}$.

Firstly, we find that
\begin{align*}
\int_\D \theta_\D(\bs x, \bs y)dv&=\int_{\D_1}\theta_{\D_1}(\bs x)dv_1+\int_{\D_2}\theta_{\D_2}(\bs y)dv_2,
\end{align*}
which equals $0$, by our assumption $\mathscr{L}_{\D_1}(1)=\mathscr{L}_{\D_2}(1)=0$.

Secondly, for $i=1, \ldots, n_1$, we prove that $\mathscr{L}_{\D}(x_i)=0$. To see this, we compute that
\begin{align*}
\int_\D\left( \frac{\vol(\p \D)}{\vol(\D)}+\theta_\D(\bs x, \bs y)\right)x_i\, dv
=\frac{\vol(\p \D)}{\vol(\D)}\int_\D x_i\,dv+\int_\D\left( \theta_{\D_1}(\bs x)+\theta_{\D_2}(\bs y)\right)x_i\, dv\\
=\frac{\vol(\p \D_1)\vol(\D_2)+\vol(\D_1)\vol(\p \D_2)}{\vol(\D_1)}\int_{\D_1}x_i\, dv_1+\vol(\D_2)\int_{\D_1}\theta_{\D_1}(\bs x)x_i\, dv_1.
\end{align*}
By applying Lemma \ref{lem:Prod} into $\int_{\p \D} x_i\, d\sigma$, we find that
\[
\mathscr L_{\D}(x_i)=\vol(\D_2)\mathscr L_{\D_1}(x_i)=0,
\]
where we used $\eqref{eq:assump}$ for the last equality.

Finally, for $j=1, \ldots, n_2$, we have $\mathscr L_{\D}(y_j)=\vol(\D_1)\mathscr L_{\D_2}(y_j)=0$ in the same manner as the above computation. This completes the proof of $\theta_\D(\bs x, \bs y)=\theta_{\D_1}(\bs x)+\theta_{\D_2}(\bs y)$.

In order to see the second assertion
\[
\theta_\D(\bs x_1, \bs x_2, \ldots , \bs x_r)=\sum_{k=1}^{r}\theta_{\D_k}(\bs x_k),
\]
for the product polytope $\D=\prod_{k=1}^r\D_k$,
we use the inductive argument. Hence the assertion is verified.
\end{proof}

For later use, we consider the value of constant
\begin{equation}\label{eq:const}
M_{X_\D}=\max_{\bs x\in \D}\set{\theta_\D(\bs x)},
\end{equation}
which verifies relative Ding stability of the corresponding toric (Fano) variety.
See Section $\ref{sec:relDK}$ for further discussion.
After posting this version of the paper on arXiv (version $5$, arXiv:$1711.10113$v$5$), the author found that the following additivity of the constant $M_{X_\D}$ is mentioned by Mabuchi in \cite[Theorem $9.9$]{Mab21}
for general (not necessarily toric) Fano manifold. However, it is worth to mention that we derive a direct combinatorial proof  for the case of toric Fano manifolds from Proposition $\ref{prop:Prod}$ and $\eqref{eq:const}$.
\begin{corollary}\label{cor:additive}
Let $\D=\prod_{k=1}^r\D_k$ be the product of (reflexive) polytopes. Then the constant of $M_{X_\D}$ has the additive property such that
\begin{equation}\label{eq:additive}
M_{X_\D}=M_{X_{\D_1}}+\cdots +M_{X_{\D_r}}.
\end{equation}
\end{corollary}

\subsection{Asymptotic relative Chow stability of Gorenstein toric del Pezzo surfaces}\label{sec:ChowToricdP}
As we mentioned in Section \ref{sec:GorToricFano}, there are $16$ isomorphism classes of Gorenstein toric del Pezzo surfaces. See \cite{Ni05} for more details. 
On the one hand, relative Ding stability of Gorenstein toric del Pezzo surfaces has been verified in \cite[Example $5.14$]{YiYao17}. 
On the other hand, it is difficult to verify asymptotic relative Chow stability of polarized toric variety because we have to show there exists $t_i\in \R$ satisfying $\eqref{eq:relChowWeight}$ for {\emph{any}} positive integer $i$
(cf. \cite{LLSW19} for (not relative) Chow stability case).
However, we can solve this difficulty in the case of $2$ dimension, by using symmetry of the associated reflexive polytopes.
See Case $3$ in the proof of Proposition $\ref{prop:ARCS}$ below.
As a consequence, we verify relative Chow stability of each Gorenstein toric del Pezzo surface. We list all the results in Table $\ref{table:RelChowSta}$.

\begin{proposition}\label{prop:ARCS}
Among all $16$ isomorphism classes of Gorenstein toric del Pezzo surfaces, there are $5$ isomorphism classes of asymptotically relatively Chow polystable surfaces and $4$ isomorphism classes of asymptotically relatively Chow unstable surfaces. The remaining $7$ classes are relatively Chow polystable with respect to the anticanonical polarization ($i=1$).
\end{proposition} 

\begin{table}
\caption{Relative Chow stability of Gorenstein toric del Pezzo surfaces}\label{table:RelChowSta}
\begin{center}
\begin{tabular}{lcccc}
\toprule
Label  & & Stability   & & $t_1$ in $\eqref{eq:relChowWeight}$ \\ 
in \cite{Ni05}&     \\ \midrule
$3$ $(\C P^2)$ &  & Asymptotically relatively  Chow polystable  & & No need \\ [3pt]
$4A$  $(\C P^1\times \C P^1)$&   & Asymptotically relatively   Chow polystable  & & No need \\  [3pt]
$4B$ $(dP_8)$&  & Relatively  Chow polystable w.r.t $\mathcal O_X(-K_X)$   & & $-65/828$ \\  [3pt]
$4C$ &  & Relatively  Chow polystable w.r.t $\mathcal O_X(-K_X)$    & & $-5/72$ \\  [3pt]
$5A$ $(dP_7)$&  & Relatively  Chow polystable w.r.t $\mathcal O_X(-K_X)$   & & $69/665$   \\  [3pt]
$5B$ &  &  Asymptotically relatively   Chow unstable   & & --  \\  [3pt]
$6A$ $(dP_6)$&  & Asymptotically relatively  Chow polystable  & & No need  \\ [3pt]
$6B$ &  & Relatively  Chow polystable w.r.t $\mathcal O_X(-K_X)$ & & $-259/1944$   \\  [3pt]
$6C$ &  &  Asymptotically relatively   Chow unstable   & & --   \\  [3pt]
$6D$ &  &  Asymptotically relatively   Chow unstable   & & --   \\  [3pt]
$7A$ &  & Relatively  Chow polystable w.r.t $\mathcal O_X(-K_X)$  & & $-409/2646$  \\  [3pt]
$7B$ &  &  Asymptotically relatively   Chow unstable   & & --  \\  [3pt]
$8A$ &  & Asymptotically relatively  Chow polystable  & & No need  \\ [3pt]
$8B$ &  & Relatively  Chow polystable w.r.t $\mathcal O_X(-K_X)$  & & $-33/200$  \\  [3pt]
$8C$ &  & Relatively  Chow polystable w.r.t $\mathcal O_X(-K_X)$   & & $-3/19$   \\  [3pt]
$9$ &  & Asymptotically relatively  Chow polystable  & & No need  \\ [3pt]
\bottomrule
\end{tabular}
\end{center}
\end{table}

\begin{proof}
{\bf{Case 1}}. Note that any toric surface has at worst orbifold singularities.
There are $5$ isomorphism classes of K\"ahler-Einstein Gorenstein toric del Pezzo surfaces with the vanishing Futaki
character, that is, $\C P^2, \C P^1\times \C P^1, S_6, \C P^1\times \C P^1/\mathbb Z_2$ and $\C P^2/ \mathbb Z_3$. 
Hence relative Chow stability coincides with Chow stability
for these $5$ classes of del Pezzo surfaces.
In particular, the vanishing Futaki character i.e. $\int_{\D}{\boldsymbol{x}}\:dv={\bf{0}}$
implies $\theta_{\D}\equiv 0$. This means $\tilde{\theta}_{i\D}(\mathbf{a})={\bf{0}}$ for any $i\in \Z_+$
and a necessary condition of asymptotic relative Chow semistability of a polarized toric variety
 $\eqref{eq:relChowWeight}$ becomes
\[
\sum_{\mathbf{a}\in \D\cap (\Z/i)^n}i\mathbf{a}=\frac{iE_{\D}(i)}{\vol(\D)}\int_\D \boldsymbol{x}\, dv
\]
for all $i\in \Z_+$. Hence we obtained the same equality in $\eqref{eq:Chow Weight}$.
Moreover, $\int_{\D}{\boldsymbol{x}}\:dv={\bf{0}}$ implies that $\sum_{\mathbf{a}\in \D\cap (\Z/i)^n}i\mathbf{a}={\mathbf{0}}$ for any $i\in \Z_+$. 
Remark that this is equivalent to the vanishing of the obstruction for asymptotic Chow semistability defined in \cite{Mab04} (see, \cite[p.$1385$]{Ono11}).
Since $X$ admits a K\"ahler-Einstein metric, it must be 
asymptotically Chow polystable for $X=\C P^2$, $\C P^1\times \C P^1$ and $S_6$ due to the result in \cite[Main Theorem]{Mab05}. 
Hence we verified the assertion for these $3$ classes.   

For the remaining two orbifolds cases $X=\C P^2/\Z_3$ (labeled $9$ in Table $\ref{table:RelChowSta}$) and $\C P^1\times \C P^1/\Z_2$ (labeled $8A$ in Table $\ref{table:RelChowSta}$), asymptotic
Chow polystability of $(X,-K_X)$ has been verified in Theorem $1.2$ $(3)$ in \cite{LLSW19}. We remark that the minimal embeddings of these del Pezzo surfaces are iven by
\[
\C P^2/\Z_3=\set{[z_0:z_1:z_2:z_3]\in \C P^3 | z_0^3-z_1z_2z_3=0}
\]
with three $A_2$ singularities, and
\[
\C P^1\times \C P^1/\Z_2=\set{[z_0: z_1: z_2 : z_3 :z_4]\in \C P^4 | z_1z_3-z_0^2=0, ~~ z_2z_4-z_0^2=0}
\]
with four $A_1$ singularities, respectively. See \cite{KN09} for further details.

\vskip 5pt

{\bf{Case 2}}. Let $X$ be a Goresntein toric del Pezzo surface labeled with $5B$ in Table $\ref{table:RelChowSta}$.
Then the associated reflexive polytope $\D\subseteq M_{\R}$ is given by
\[
\D=\text{conv}\set{(-1,0), (1,-2), (0,1), (-1,1)}.
\]
We claim that $X$ is asymptotically relatively Chow unstable by using Corollary $\ref{cor:YZ}$.
Hence it suffices to show that there is no $t_1\in \R$ satisfying $\eqref{eq:relChowWeight}$ for $i=1$.
See Remark $3.12$ and Proposition $5.4$ in \cite{YZ19}.
We readily see that
\begin{align*}
E_\D (i)&=\frac{5}{2}i^2+\frac{5}{2}i+1, \;  \displaystyle \int_\D \boldsymbol{x}\, dv=\left(-\frac{1}{3}, -\frac{1}{3} \right), \;
 \sum_{\mathbf{a}\in \D \cap \Z^2}\mathbf{a} =(-1,-1),\\ 
\textstyle \theta_\D(\boldsymbol{x})&=-\frac{1}{529}(1032x_1+648x_2+224) \quad \text{ and } \quad \bar{\theta}_\D=\frac{56}{529}.
\end{align*}
Therefore
\[
t_1\sum_{\mathbf{a}\in \D \cap \Z^2}\tilde{\theta}_\D(\mathbf{a})\mathbf{a} =-t_1\left(\frac{872}{529}, \frac{1160}{529} \right).
\]
This yields that there is no $t_1\in \R$ satisfying $\eqref{eq:relChowWeight}$.

\vskip 5pt

{\bf{Case 3}}. Let $X$ be a weighted projective space $\C P(1,1,2)$. This is a Gorenstein toric del Pezzo surface labeled with
$8C$ in Table $\ref{table:RelChowSta}$ and the corresponding reflexive polytope $\D$ is
\[
\D=\text{conv}\set{(-1,2), (1,0), (-1,-2)}.
\]
We prove that $(X, -K_X)$ is relatively Chow polystable.
A straightforward computation shows that
\begin{align*}
E_\D (i)&=4i^2+4i+1, \;  \displaystyle \int_\D \boldsymbol{x}\, dv=\left(-\frac{4}{3}, 0 \right), \;
 \sum_{\mathbf{a}\in \D \cap \Z^2}\mathbf{a} =(-4,0),\\ 
\textstyle \theta_\D(\boldsymbol{x})&=-\frac{3}{2}x_1-\frac{1}{2} \quad \text{ and } \quad \bar{\theta}_\D=\frac{1}{6}.
\end{align*}
Taking $i=1$ in $\eqref{eq:relChowWeight}$, we find that $t_1=-3/19$ satisfies the equation
\[
\sum_{\mathbf{a}\in \D\cap \Z^2}\mathbf{a}+t_1\sum_{\mathbf{a}\in \D\cap \Z^2}
\tilde{\theta}_{\D}(\mathbf{a})\mathbf{a}=\frac{E_{\D}(1)}{\vol(\D)}\int_\D \boldsymbol{x}\, dv.
\]
Moreover $\D$ is invariant under unimodular transformation $\begin{pmatrix} 1 & 0 \\ 0 & -1 \end{pmatrix}$
which gives the coordinate interchange $x_2 \mapsto -x_2$.
By this symmetry we conclude that there exists $t_i$ for {\emph{any}} $i\in \Z_+$ such that $\eqref{eq:relChowWeight}$ holds.

Next we verify $\eqref{eq:ChowPoly}$. For $t=-3/19$, we readily see that the left hand side of $\eqref{eq:ChowPoly}$
is given by $p:=\frac{4}{19}(11,14,17,14,11,11,11,11,14)$.
On the other hand, the Chow polytope $\mathrm{Ch}(\D)$ is the $6$-dimensional polytope in $\R^9$ with $296$ vertices.
In particular, $\mathrm{Ch}(\D)$
\footnote{We used package {\tt{TOPCOM}} for the computation.} 
is determined by $3$ defining equations $f_i(\boldsymbol{x})=0 \; (i=1,2,3)$ and $26$ 
defining inequalities $h_j(\boldsymbol{x})\geqslant 0 \; (j=1 , \ldots, 26)$ in $\R^9$.
By direct computation, one can see that $f_i(p)=0$ and $h_j(p)>0$ hold for all $i,j$. This implies
$p\in \mathrm{Int}(\mathrm{Ch}(\D))$ and the assertion is verified. Other cases are similar and further details are left to the reader.
\end{proof}
\begin{remark}\rm
\begin{enumerate}
\item Using symmetry of polytopes, one can verify the existence of $t_i$ for $i\gg 0$ satisfying $\eqref{eq:relChowWeight}$ of each case ($4B, 4C, 5A, 6B, 7A, 8B$ and $8C$ in Table $\ref{table:RelChowSta}$).
We mention that this is only a necessary condition for $(X, L)$ to be asymptotically relatively Chow semistable (Corollary $\ref{cor:YZ}$).\\
\item On the other hand, $\mathrm{Ch}(i\D)$ will be a huge number of vertices in a multidimensional Euclidean space if $i>0$ is a sufficiently large positive integer.
Hence, it is generally impossible to verify the condition
\[
\frac{i^n(n+1)!\: \vol(\D)}{E_\D(i)}(d_{i\D}+t_i\tilde{\theta}_{i\D})\in \mathrm{Int}(\mathrm{Ch}(i\D))
\]
for arbitrary positive integer $i$.
See \cite{KSZ92} and \cite{GKZ94} for more combinatorial descriptions of $\mathrm{Ch}(\D)$.
\end{enumerate}
\end{remark}

\subsection{Relative Ding/K-stability}\label{sec:relDK}
In \cite{NSY23}, we found that there are several examples of toric Fano manifolds which clarify the difference between relative K-stability and relative Ding stability.
More specifically, we verified that if $X$ is one of
\begin{itemize}
\item a toric Fano $3$-fold $\mathcal{B}_1=\P_{\P^2}(\mathcal O\oplus\mathcal{O}(2))$, or
\item toric Fano $4$-folds (which are all $\P^1$-bundles over $\P^3$) $B_1=\P_{\P^3}(\mathcal{O}\oplus \mathcal{O}(3))$, $B_2=\P_{\P^3}(\mathcal{O}\oplus \mathcal{O}(2))$, $L_1=\P_{\P^3}(\mathcal{O}\oplus \mathcal{O}(1,1,1))$,
\end{itemize}
then $(X,-K_X)$ is relatively K-polystable, but it is relatively Ding unstable. In order to prove that these four examples ($\mathcal{B}_1$, $B_1$, $B_2$ and $L_1$) admit extremal K\"ahler metrics in their first 
Chern classes, which in turn to be relatively K-polystable, we focused on their geometric structure such as projective bundles, Bott structures, etc \cite{ACGT-F08, BCT-F19, Gu95, Hw94}. 
On the one hand, relative Ding stability of toric Fano manifolds is determined by the value of constant $M_{X_\D}$ defined in $\eqref{eq:const}$
is larger than $1$ or not, due to the work of Yao \cite{YiYao17}. On the other hand, Proposition $\ref{prop:Prod}$ implies that the products of (higher dimensional) toric extremal manifolds are more likely to be
relatively Ding unstable, by the additive property of $M_{X_\D}$ (see, \eqref{eq:additive} and Corollary $\ref{cor:ExtProd}$).
In this section, we systematically construct examples of a relatively K-polystable toric Fano manifold, but it is relatively Ding unstable. 

Let us quick review the notion of relative K-stability and relative Ding stability for (smooth) toric Fano variety.
Remark that we only consider {\emph{toric}} (or {\emph{$T$-equivariant}}) {\emph{test configuration}} for the definitions of relative Ding/K-stability.
This is because for polarized toric varieties, it suffices to check only toric test configurations of relative Ding/K-stability as in \cite{De20} and \cite{LL22}. 
We refer the reader to Section $2$ in \cite{NSY23}, for more details.

Let $\D\subseteq M_\R$ be an $n$-dimensional reflexive Delzant polytope. In this case, the average of the scalar curvature, i.e., $\overline{S}={\vol(\p \D)}/{\vol(\D)}$ equals to $n$, and hence
the functional defined in $\eqref{def:NAKfun}$ will be
\[
\mathscr{L}_\D(u)=\int_{\p \D}u\,d\sigma -\int_\D(n+\theta_\D)u\,dv,
\]
where $u$ is a convex function of $\D$. A convex function $u:\D\to \R$ is called {\emph{rational PL convex}} if $u$ has the form of
\[
u({\bs{x}})=\max\set{f_1(\bs x), \ldots, f_m(\bs x)}
\]
with each $f_k$ a rational affine function. The associated anticanonically polarized smooth toric Fano variety $(X_\D, -K_{X_\D})$ is {\emph{relatively K-polystable}} if 
$\mathscr{L}_\D(u)\geqslant 0$ for any rational PL convex function $u$, and the equality holds if and only if $u$ is affine linear.
Let $M_{X_\D}$ be the {\emph{Mabuchi constant}} defined in $\eqref{eq:const}$. $(X_\D,-K_{X_\D})$ is {\emph{relatively Ding polystable}} if $M_{X_\D}\leqslant 1$.
Conversely, it is called {\emph{relatively Ding unstable}} if $M_{X_\D}> 1$. See, \cite{YiYao17} and \cite[Proposition $1.2$]{NSY23} for further details.

On the other hand, Corollary $\ref{cor:additive}$ implies that $X_\D$ is more likely to be relatively Ding unstable if the dimension of $X_\D$ is getting higher and higher.
Meanwhile, for given extremal K\"ahler manifolds $(X_k, g_k)$ with $1\leqslant k \leqslant r$, the product manifold $X=\prod_{k=1}^r X_k$ admits the product extremal K\"ahler metric $\prod_{k=1}^rg_k$.
Thus, $(X,-K_X)$ must be relatively K-polystable. In particular, $X$ is Fano. From this observation, one can expect that there are more examples of toric Fano manifolds which clarify
the difference between relative K-stability and relative Ding stability. As a consequence of $\eqref{eq:additive}$, we systematically construct infinitely many examples of relatively K-polystable
extremal toric Fano manifolds which are relatively Ding unstable.
\begin{corollary}\label{cor:ExtProd}
For $1\leqslant k \leqslant r$, let $X_k$ be an extremal toric Fano manifold with the associated polytope $\D_k$ and let $\theta_{\D_k}(\bs x_k)$ be the potential function of $\D_k$ satisfying
$\frac{1}{r}\leqslant \theta_{\D_k} < 1$. 
Let $\D$ be the product of polytopes $\D_k$ for $1\leqslant k \leqslant r$. Then the associated anticanonically polarized toric Fano manifold $(X_\D,-K_{X_\D})$ is relatively K-polystable, but it is relatively Ding unstable.
\end{corollary}
Using Table 3 in \cite{NSY23}, we obtain the following examples.
\begin{example}\label{ex:Y23}\rm
Let $dP_{9-i}$ denote a smooth del Pezzo surface with degree $(9-i)$ which is obtained by the blow-up of $\P^2$ at $i$ points. Fixing a positive integer $r$, we denote a copy of $dP_8$ by $X_k$
for $1\leqslant k\leqslant r$. It is known that $X_k$ admits an extremal K\"ahler metric in every K\"ahler class \cite{Ca82}, and this yields that $X=\prod_{k=1}^rX_k$ also admits the extremal 
K\"ahler metric in its first Chern class. Hence $(X,-K_X)$ is relatively K-polystable for any positive integer $r$.

On the other hand, the direct computation shows that $M_{X_k}=5/11$. See, \cite[Table1, No.3]{NSY23}.
Thus, we conclude that $M_X=5r/11$ by $\eqref{eq:additive}$. Consequently, $(X,-K_X)$ is relatively (uniform) Ding polystable if $r=2$, whereas it is relatively Ding unstable if $r\geqslant 3$.
We note that the toric Fano $4$-fold $dP_8\times dP_8$ is denoted by $L_7$ (No. $55$) in \cite[Table 3]{NSY23}.
In particular, there are other examples such as $Q_{10}=dP_7\times dP_8$ (No. $93$) and $dP_7\times dP_7$ (No. $119$) in four dimensional case.
\end{example}

 
\noindent {\bf{Acknowledgements.}} It is my pleasure to thank Yi Yao for his helpful e-mail and valuable comments.
This work was partially supported by JSPS KAKENHI Grant Number JP$18$K$13406$, JP$22$K$03316$, and Kagawa University Research Promotion Program $2021$ (KURPP).

\bigskip

\noindent {\bf{Data Availability Statement.}} 
Data sharing not applicable to this article as no datasets were generated or analysed during the current study.

\bigskip

\noindent{\bf{Declaration.}}
The author has no competing interests to declare that are relevant to the content of this article.
\newpage

\begin{table}[H]
\caption{Combinatorial data and the delta invariant of Gorenstein toric del Pezzo surfaces}\label{table:Combinatorics}
\begin{center}
\begin{tabular}{clc}
\toprule
Label   
&~~  $\D\subseteq M_{\R}$ & Symmetry of $\D$  \\ 
in \cite{Ni05} 
&  & \\ \midrule
$3$  
& $\mathrm{conv}\Set{\begin{pmatrix} -1\\ 1 \\  \end{pmatrix}, \begin{pmatrix}2\\ 1 \\  \end{pmatrix}, 
\begin{pmatrix}-1\\ -2 \\  \end{pmatrix}}$ & \text{No need} \\ [15pt]
$4A$   
& $\mathrm{conv}\Set{\begin{pmatrix}-1\\ 1\\  \end{pmatrix}, \begin{pmatrix}-1\\ -1 \\  \end{pmatrix}, 
\begin{pmatrix}1\\ -1 \\  \end{pmatrix},\begin{pmatrix} 1\\ 1 \\  \end{pmatrix}}$ & \text{No need} \\  [15pt]
$4B$   
& $\mathrm{conv}\Set{\begin{pmatrix}0\\ 1 \\  \end{pmatrix}, \begin{pmatrix}1\\ 1 \\  \end{pmatrix}, 
\begin{pmatrix}1\\ 0 \\  \end{pmatrix},\begin{pmatrix} -1 \\ -1 \\  \end{pmatrix}}$ & 
$\begin{pmatrix} 0 & 1 \\ 1 & 0 \end{pmatrix}
$\\  [15pt]
$4C$    
& $\mathrm{conv}\Set{\begin{pmatrix}-1\\ 1 \\  \end{pmatrix}, \begin{pmatrix}1\\ 0 \\  \end{pmatrix}, 
\begin{pmatrix}-1\\ -1 \\  \end{pmatrix}}$ & 
$\begin{pmatrix} 1 & 0 \\ 0 & -1 \end{pmatrix}
$\\  [15pt]
$5A$   
& $\mathrm{conv}\Set{\begin{pmatrix}-1\\ -1 \\  \end{pmatrix}, \begin{pmatrix}0\\ -1 \\  \end{pmatrix}, 
\begin{pmatrix}1\\ 0 \\  \end{pmatrix},\begin{pmatrix} 0 \\ 1 \\  \end{pmatrix}, \begin{pmatrix} -1 \\ 0 \\  \end{pmatrix}}$ & 
$\begin{pmatrix} 0 & 1 \\ 1 & 0 \end{pmatrix}
$\\  [15pt]
$5B$   
& $\mathrm{conv}\Set{\begin{pmatrix}-1\\ 0 \\  \end{pmatrix}, \begin{pmatrix}1\\ -2 \\  \end{pmatrix}, 
\begin{pmatrix}0\\ 1 \\  \end{pmatrix},\begin{pmatrix} -1 \\ 1 \\  \end{pmatrix}}$ & 
-- \\  [15pt]
$6A$ 
& $\mathrm{conv}\Set{\begin{pmatrix}1\\ 0 \\  \end{pmatrix}, \begin{pmatrix}0\\ 1 \\  \end{pmatrix}, 
\begin{pmatrix}-1\\ 1 \\  \end{pmatrix}, \begin{pmatrix}-1\\ 0 \\  \end{pmatrix},
\begin{pmatrix} 0 \\ -1 \end{pmatrix}, \begin{pmatrix} 1 \\ -1 \\  \end{pmatrix}}$ & \text{No need} \\  [15pt]
$6B$   
& $\mathrm{conv}\Set{\begin{pmatrix}-1\\ 1 \\  \end{pmatrix}, \begin{pmatrix}0\\ 1 \\  \end{pmatrix}, 
\begin{pmatrix}1\\ 0 \\  \end{pmatrix},\begin{pmatrix} 0 \\ -1 \\  \end{pmatrix}, \begin{pmatrix} -1 \\ -1 \\  \end{pmatrix}}$ & 
$\begin{pmatrix} 1 & 0 \\ 0 & -1 \end{pmatrix}
$\\  [15pt]
$6C$   
& $\mathrm{conv}\Set{\begin{pmatrix}-1\\ -1 \\  \end{pmatrix}, \begin{pmatrix}0\\ -1 \\  \end{pmatrix}, 
\begin{pmatrix}1\\ 1 \\  \end{pmatrix},\begin{pmatrix} -1 \\ 1 \\  \end{pmatrix}}$ & 
-- \\  [15pt]
$6D$    
& $\mathrm{conv}\Set{\begin{pmatrix}-1\\ 1 \\  \end{pmatrix}, \begin{pmatrix}-1\\ -2 \\  \end{pmatrix}, 
\begin{pmatrix}1\\ 1 \\  \end{pmatrix}}$ & 
-- \\  [15pt]
$7A$   
& $\mathrm{conv}\Set{\begin{pmatrix}0\\ 1 \\  \end{pmatrix}, \begin{pmatrix}1\\ 0 \\  \end{pmatrix}, 
\begin{pmatrix}1\\ -1 \\  \end{pmatrix},\begin{pmatrix} -1 \\ -1 \\  \end{pmatrix}, \begin{pmatrix} -1 \\ 1 \\  \end{pmatrix}}$ & 
$\begin{pmatrix} 0 & 1 \\ 1 & 0 \end{pmatrix}
$\\  [15pt]
$7B$   
& $\mathrm{conv}\Set{\begin{pmatrix}1\\ 0 \\  \end{pmatrix}, \begin{pmatrix}1\\ 1 \\  \end{pmatrix}, 
\begin{pmatrix}-3\\ -1 \\  \end{pmatrix},\begin{pmatrix} 0 \\ -1 \\  \end{pmatrix}}$ & 
-- \\  [15pt]
$8A$   
& $\mathrm{conv}\Set{\begin{pmatrix}1\\ 0 \\  \end{pmatrix}, \begin{pmatrix}0\\ 1 \\  \end{pmatrix}, 
\begin{pmatrix}-1\\ 0 \\  \end{pmatrix},\begin{pmatrix}0 \\-1 \\  \end{pmatrix}}$ & \text{No need} \\  [15pt]
$8B$    
& $\mathrm{conv}\Set{\begin{pmatrix}-1\\ 2 \\  \end{pmatrix}, \begin{pmatrix}2\\ -1 \\  \end{pmatrix}, 
\begin{pmatrix}0\\ -1 \\  \end{pmatrix},\begin{pmatrix} -1 \\ 0 \\  \end{pmatrix}}$ & 
$\begin{pmatrix} 0 & 1 \\ 1 & 0 \end{pmatrix}
$\\  [15pt]
$8C$   
& $\mathrm{conv}\Set{\begin{pmatrix}-1\\ 2 \\  \end{pmatrix}, \begin{pmatrix}1\\ 0 \\  \end{pmatrix}, 
\begin{pmatrix}-1\\ -2 \\  \end{pmatrix}}$ & 
$\begin{pmatrix} 1 & 0 \\ 0 & -1 \end{pmatrix}
$\\  [15pt]
$9$   
& $\mathrm{conv}\Set{\begin{pmatrix}1\\ 0 \\  \end{pmatrix}, \begin{pmatrix}0\\ 1 \\  \end{pmatrix}, 
\begin{pmatrix}-1\\ -1\\  \end{pmatrix}}$ & \text{No need} \\ [15pt]
\bottomrule
\end{tabular}
\end{center}
\end{table}


\end{document}